\renewcommand{\baselinestretch}{1.4}
\newcommand{\Rmnum}[1]{\expandafter\@slowromancap\romannumeral #1@}
\newtheorem{theorem}{Theorem}[section]
\newtheorem{proposition}[theorem]{Proposition}
\newtheorem{problem}[theorem]{Problem}
\newtheorem{lemma}[theorem]{Lemma}
\newtheorem{claim}{Claim}[theorem]
\begin{document}
\title{Face-degree bounds for planar critical graphs}

\author{Ligang Jin\footnotemark[1] \footnotemark[2] , Yingli Kang\footnotemark[1] \footnotemark[3] , Eckhard Steffen\footnotemark[1]}

\footnotetext[1]{Institute of Mathematics and Paderborn Center for Advanced Studies,
	Paderborn University, Paderborn, Germany}
\footnotetext[2]{supported by Deutsche Forschungsgemeinschaft (DFG) grant STE 792/2-1}
\footnotetext[3]{Fellow of the International Graduate School "Dynamic Intelligent Systems"}

\footnotetext{Email: \{ligang, yingli\}@mail.upb.de, es@upb.de}					

\date{}

\maketitle

\abstract{The only remaining case of a well known conjecture of Vizing states that there is no planar graph with maximum degree 6 and 
	edge chromatic number 7. 
	We introduce parameters for planar graphs,  based on the degrees of the faces, and study the question whether
	there are upper bounds for these parameters for planar edge-chromatic critical graphs. Our results provide
	upper bounds on these parameters for smallest counterexamples to Vizing's conjecture, thus providing a partial characterization 
	of such graphs, if they exist. 
	
	For $k \leq 5$ the results give insights into the structure of planar edge-chromatic critical graphs.}

\par\bigskip\noindent
\textbf{ Keywords}: Vizing's planar graph conjecture; planar graphs; critical graphs; edge colorings

\section{Introduction}

We consider finite simple graphs G with vertex set $V(G)$ and edge set $E(G)$. The \textit{vertex-degree} of $v \in V(G)$ is denoted by $d_G(v)$, and $\Delta(G)$ denotes the \textit{maximum vertex-degree} of $G$.
If it is clear from the context, then $\Delta$ is frequently used.
The \textit{edge-chromatic-number} of $G$ is denoted by $\chi'(G)$. Vizing \cite{Vizing_1964} proved that $\chi'(G) \in \{\Delta(G), \Delta(G)+1\}$. If $\chi'(G) = \Delta(G)$, then $G$ is a \textit{class 1} graph,
otherwise it is a \textit{class 2} graph. A class 2 graph $G$ is \textit{critical}, if $\chi'(H) < \chi'(G)$ for every proper subgraph $H$ of $G$. Critical graphs with maximum vertex-degree $\Delta$ are also
called \textit{$\Delta$-critical}. It is easy to see that critical graphs are 2-connected.
A graph $G$ is \textit{overfull} if $|V(G)|$ is odd and $|E(G)| \geq  \Delta(G) \lfloor \frac{1}{2} |V(G)| \rfloor + 1$.
Clearly, every overfull graph is class 2.
A graph is \textit{planar} if it can be embedded into the Euclidean plane. A \textit{plane graph} $(G,\Sigma)$ is a planar graph $G$ together with an embedding $\Sigma$ of $G$ into the Euclidean plane.
That is, $(G,\Sigma)$ is a particular drawing of $G$ in the Euclidean plane.

In 1964, Vizing \cite{Vizing_1964} showed for each $k \in \{2,3,4,5\}$ that there is a planar class 2 graph $G$ with $\Delta(G) = k$. He proved that every planar graph with
maximum vertex-degree at least 8 is a class 1 graph, and conjectured that every planar graph $H$ with $\Delta(H) \in \{6,7\}$ is a class 1 graph. Vizing's conjecture has been
proved for planar graph with maximum vertex-degree 7 by Gr\"unewald \cite{Gruenewald_2000},  Sanders and Zhao \cite{Sanders_Zhao_2001}, and Zhang \cite{Zhang_2000} independently.

Zhou \cite{Zhou_2003} proved for each $k \in \{3,4,5\}$ that if $G$ is a planar graph with $\Delta(G) = 6$ and $G$ does not contain a circuit of length $k$, then $G$ is a class 1 graph.
Vizing's conjecture is confirmed for some other classes of planar graphs which do not contain some specific (chordal) circuits \cite{Bu_Wang_2006, Wang_Chen_2007, Wang_Xu_2013}.

\begin{figure} [ht]
	% Requires \usepackage{graphicx}
	\centering
	\includegraphics[width=4.0in]{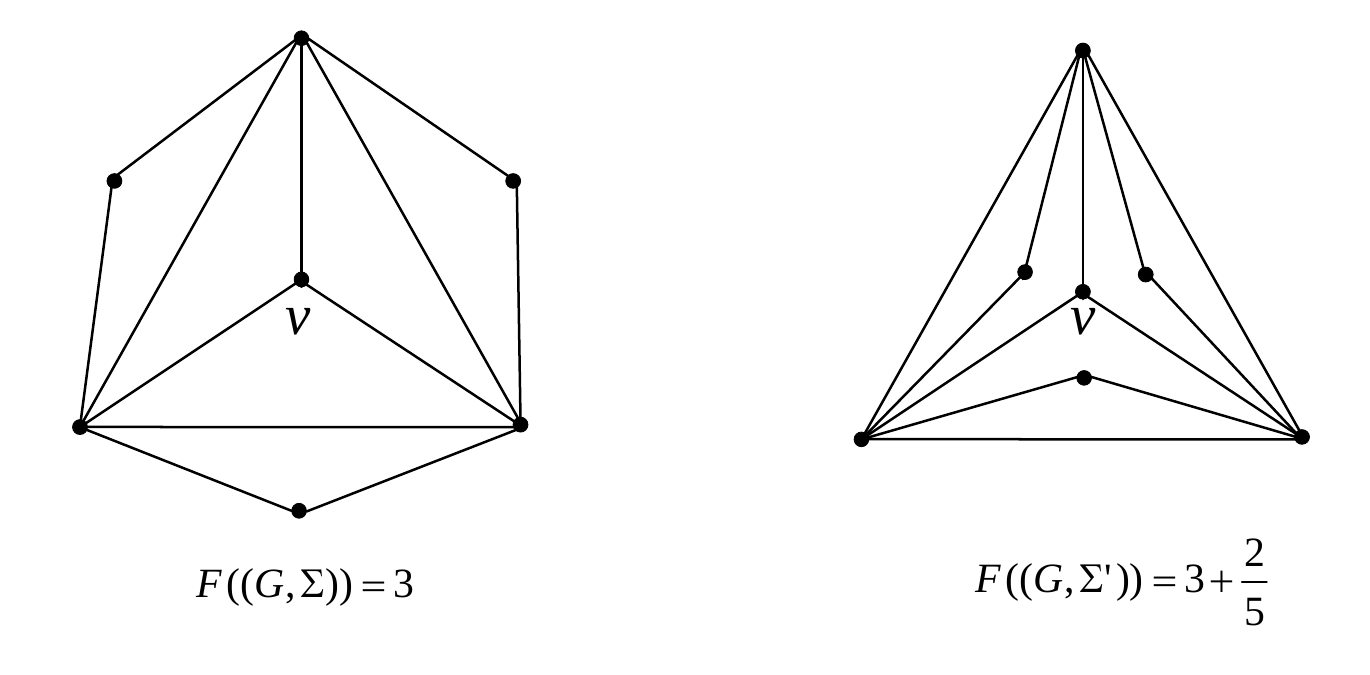}
	\caption{Graph $G$ has two embeddings $\Sigma$, $\Sigma'$ such that $F((G,\Sigma)) \not = F((G,\Sigma'))$.} \label{F_overline}
\end{figure}

Let $G$ be a 2-connected planar graph, $\Sigma$ be an embedding of $G$ in the Euclidean plane and $F(G)$ be the set of faces of $(G,\Sigma)$.
The \textit{degree} $d_{(G,\Sigma)} (f) $ of a face $f$ is the length of its facial circuit.
If there is no harm of confusion we also write $d_G(f)$ instead of $d_{(G,\Sigma)}(f)$.
Let
$\overline{F}(G) = \frac{1}{|F(G)|} \sum_{f \in F(G)} d_{G}(f)$ be the {\em average face-degree} of $G$.
Euler's formula $|V(G)| - |E(G)| + |F(G)| = 2$ implies that $\overline{F}(G) = \frac{2|E(G)|}{|E(G)|-|V(G)|+2}$.

Let $v \in V(G)$. If $d_G(v) = k$, then $v$ is incident to $k$ pairwise different
faces $f_1, \dots, f_k$.  Let $F_{(G,\Sigma)}(v)=\frac{1}{k}(d_{(G,\Sigma)}(f_1)+ \dots + d_{(G,\Sigma)}(f_k))$ and $F((G,\Sigma)) = \min\{F_{(G,\Sigma)}(v)\colon\ v\in V(G)\}$.
Clearly, $F((G,\Sigma)) \geq 3$ since every face has length at least 3. As Figure \ref{F_overline} shows, $F((G,\Sigma))$ depends on the embedding $\Sigma$.
The {\em local average face-degree} of a 2-connected planar graph $G$ is 
$$F^*(G) = \max \{F((G,\Sigma))\colon\ (G,\Sigma) \mbox{ is a plane graph}\}.$$
This parameter is independent from the embeddings of $G$, and $F^*(G)\geq 3$ for all planar graphs.
Let $k$ be a positive integer. Let
$\overline{b}_k = \sup \{\overline{F}(G)\colon\ G \mbox{ is a $k$-critical planar graph}\}$ and $b^*_k = \sup \{F^*(G)\colon\ G \mbox{ is a $k$-critical planar graph}\}$.
We call $\overline{b}_k$ the {\em the average face-degree bound}, and $b^*_k$ the
{\em local average face-degree bound} for $k$-critical planar graphs.
If $k = 1$ or $k \geq 7$, then every planar graph with maximum vertex-degree $k$ is a class 1 graph and therefore,
$\{\overline{F}(G)\colon\ G \mbox{ is a $k$-critical planar graph}\}=\{F^*(G)\colon\ G$ is a $k$-critical planar graph$\} = \emptyset$.
Hence, $\overline{b}_k$ and $b^*_k$ do not exist in these cases. Therefore, we focus
on $k \in \{2,3,4,5,6\}$ in this paper. The main results are the following two theorems.

\begin{theorem} \label{Main_global} Let $k \geq 2$ be an integer.
	\begin{itemize}
		\item If $k = 2$, then $\overline{b}_k = \infty$.
		\item If $k = 3$, then $6 \leq \overline{b}_k \leq 8$.
		\item If $k=4$, then $4 \leq \overline{b}_k \leq 4 + \frac{4}{5}$
		\item If $k=5$, then $3 + \frac{1}{3} \leq \overline{b}_k \leq 3 + \frac{3}{4}$.
		\item If $k= 6$ and $\overline{b}_k$ exists, then $\overline{b}_k \leq 3 + \frac{1}{3}$.
	\end{itemize}
\end{theorem}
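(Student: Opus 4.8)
The plan is to treat the five statements separately, with all four upper bounds resting on one reformulation. Write $n=|V(G)|$ and $m=|E(G)|$. By Euler's formula $\overline F(G)=\frac{2m}{m-n+2}$, and for fixed $n\ge 3$ this is a strictly decreasing function of $m$; consequently, to prove an upper bound $\overline b_k\le B_k$ it suffices to establish a linear edge bound $m\ge\frac{B_k}{B_k-2}(n-2)$ for every $k$-critical planar graph $G$. Since a planar class $2$ graph has $\Delta\le 6$ and critical graphs are $2$-connected, every $k$-critical planar graph with $3\le k\le 6$ has all vertex-degrees in $\{2,\dots,k\}$; writing $n_i$ for the number of degree-$i$ vertices, the edge bound is then equivalent to $\sum_{i=2}^{k}(c_k-i)\,n_i\le 2c_k$, where $c_k=\frac{2B_k}{B_k-2}$ takes the values $\tfrac83,\ \tfrac{24}{7},\ \tfrac{30}{7},\ 5$ corresponding to $B_k=8,\ 4+\tfrac45,\ 3+\tfrac34,\ 3+\tfrac13$. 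Dually, each lower bound will be obtained by exhibiting $k$-critical planar graphs whose edge-to-vertex ratio is forced close to the value that makes $\overline F$ equal the claimed bound.

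The case $k=2$ is immediate: a graph with $\Delta=2$ is a disjoint union of paths and cycles, so the $2$-critical graphs are precisely the odd cycles $C_{2\ell+1}$, which embed in the plane with exactly two faces of degree $2\ell+1$, whence $\overline F(C_{2\ell+1})=2\ell+1\to\infty$ and $\overline b_2=\infty$. For the lower bounds with $k\in\{3,4,5\}$ I would produce an infinite family of $k$-critical planar graphs in which the number of vertices of degree less than $k$ stays bounded while $n\to\infty$, so that the average degree tends to $k$; for $k=3$ one may start from the $5$-vertex graph $H_0$ obtained from $K_4$ by subdividing one edge (it is overfull, hence class $2$, and a short case check shows it is critical) and iterate the operation of expanding a degree-$3$ vertex into a triangle, which keeps the graph planar and class $2$ and can be checked to preserve criticality, while for $k=4,5$ one uses Vizing's planar class $2$ graphs as base graphs for an analogous iterated construction (for instance, iterated Haj\'os-type joins of small critical graphs). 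These families realise $\overline F\to 6$, $\overline F\to 4$ and $\overline F\to \tfrac{10}{3}$, giving $\overline b_3\ge 6$, $\overline b_4\ge 4$ and $\overline b_5\ge 3+\tfrac13$.

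The substance of the theorem is the upper bounds, which I would prove by discharging. Fix a $k$-critical planar graph $G$ with a plane embedding; by Vizing's theorem $\Delta=k\le 6$ and all degrees lie in $\{2,\dots,k\}$. Assign to each vertex $v$ the initial charge $\mu(v)=c_k-d(v)$, so that degree-$k$ vertices carry a positive surplus and low-degree vertices a deficit, and the total charge is the constant $c_k n-2m$; the target is $\sum_v\mu(v)\le 2c_k$. The structural engine is Vizing's Adjacency Lemma — every vertex $v$ with $d(v)<\Delta$ has at least $\Delta-d(v)+1$ neighbours of degree $\Delta$, so each vertex of small degree is surrounded by many degree-$\Delta$ vertices. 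I would then move surplus from degree-$k$ vertices (and, where it streamlines the bookkeeping, from faces of large degree) to nearby deficient vertices according to rules calibrated so that, using the Adjacency Lemma to guarantee enough donors for each deficient vertex and planarity to cap how many recipients a single donor can be forced to serve, every vertex finishes with charge at most $0$ apart from a bounded total surplus; the resulting inequality $\sum_{i}(c_k-i)n_i\le 2c_k$ then yields $\overline F(G)\le B_k$. For $k=6$ one may alternatively invoke known lower bounds on the number of edges of $\Delta$-critical graphs, which already force $m\ge\tfrac52(n-2)$; for $k\le 5$ the analogous edge bounds are too weak for large $n$, so planarity must genuinely enter.

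The main obstacle is the discharging analysis for $k\in\{3,4,5\}$: pinning down donation rules that yield exactly the constants $8$, $4+\tfrac45$ and $3+\tfrac34$ calls for a careful case analysis of the local configurations around degree-$k$ vertices and short faces, and will almost certainly require refinements of Vizing's Adjacency Lemma (for example, bounds on the number of degree-$(k-1)$ and degree-$(k-2)$ neighbours a vertex can have in a critical graph). A secondary difficulty lies on the lower-bound side: one must verify that the extremal families are genuinely critical, not merely class $2$, which forces the iterated construction and its base graphs to be chosen so that preservation of criticality can be proved.
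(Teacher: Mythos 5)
Your reduction of the upper bounds to edge lower bounds is exactly the paper's: by Euler's formula $\overline F(G)=\frac{2m}{m-n+2}$ is decreasing in $m$ for fixed $n$, so a bound $m\ge\frac{B_k}{B_k-2}\,n$ forces $\overline F(G)<B_k$ (the paper records this as Lemma \ref{size_planar} and Proposition \ref{pro_globe}), and your lower-bound strategy (explicit families) and your $k=2$ argument also match. The genuine gap is what you do with the edge bounds for $k\in\{3,4,5\}$. You assert that the known edge bounds for critical graphs are ``too weak for large $n$, so planarity must genuinely enter,'' and you therefore propose to prove $\sum_i(c_k-i)n_i\le 2c_k$ from scratch by a planar discharging argument whose rules you never specify and whose case analysis you explicitly defer. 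This is backwards: the constants $8$, $4+\frac45$, $3+\frac34$, $3+\frac13$ are chosen precisely so that the required ratios $\frac{B_k}{B_k-2}$ equal $\frac43$, $\frac{12}{7}$, $\frac{15}{7}$, $\frac52$, which are exactly the known lower bounds $|E|\ge\frac43|V|$ (Jakobsen, $k=3$), $|E|\ge\frac{12}{7}|V|$ and $|E|\ge\frac{15}{7}|V|$ (Woodall, $k=4,5$), and $|E|\ge\frac12(5|V|+3)$ (Luo--Miao--Zhao, $k=6$) for \emph{all} $k$-critical graphs, with no planarity needed. The paper simply cites Theorems \ref{Jakobsen}, \ref{Woodall} and \ref{Luo_etal} and is done in three lines; your plan, taken literally, amounts to reproving Woodall's and Jakobsen's theorems (long, delicate discharging/adjacency-lemma arguments in their own right), and as written it is an unexecuted sketch, not a proof.

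The lower bounds are also left incomplete. For $k=3$ the vertex-to-triangle expansion is plausible but you neither verify that it preserves criticality nor compute $\overline F$ of the resulting graphs; for $k=4$ and especially $k=5$ you name no concrete construction (``iterated Haj\'os-type joins'' is not an edge-colouring construction and does not obviously stay planar, $5$-regular up to one subdivision, and critical). The paper's Lemma \ref{graphs} does the real work here: it builds explicit families from concentric circuits joined by spokes with one edge subdivided, shows they are overfull (hence class 2), and proves criticality by exhibiting, for every edge $e$, a $\Delta$-edge-colouring of $G-e$ (by induction for $k=3,4$, and via an explicit perfect-matching decomposition for the $5$-regular family $T_m$). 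Until you either cite the known edge bounds or actually carry out the discharging, and until you pin down and verify the extremal families, the proposal establishes only the $k=2$ case.
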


\begin{theorem} \label{MAIN} Let $k \geq 2$ be an integer.
	\begin{itemize}
		\item If $k \in \{2,3,4\}$, then $b^*_k = \infty$.
		\item If $k = 5$, then $3 + \frac{1}{5} \leq b^*_k \leq 7+\frac{1}{2}$.
		\item If $k = 6$ and $b^*_k$ exists, then $b^*_k \leq 3 + \frac{2}{5}$.
	\end{itemize}
\end{theorem}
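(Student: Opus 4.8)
\emph{The cases $k\in\{2,3,4\}$.} Here one exhibits explicit infinite families of $k$-critical planar graphs on which $F^{*}$ is unbounded. For $k=2$ the $2$-critical graphs are exactly the odd circuits $C_{2n+1}$, whose (essentially unique) embedding has two faces of degree $2n+1$, so $F^{*}(C_{2n+1})=2n+1$. For $k=3$ I would use the graph $G_{m}$ built from $m$ copies $D_{1},\dots,D_{m}$ of $K_{4}-e$ (a ``diamond'', whose two degree-$2$ vertices I call its \emph{ports}) by joining a port of $D_{i}$ to a port of $D_{i+1}$ by an edge for $1\le i\le m-1$ and adding one new vertex $v$ adjacent to the two remaining ports. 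Then $G_{m}$ is planar, $2$-connected, of maximum degree $3$; it is class $2$ because each diamond forces both of its ports to miss the same colour, whence all these colours coincide and $v$ is forced to receive that colour on both of its edges; and it is $3$-critical because deleting any edge breaks one of these forcings. In the embedding in which each diamond contributes two triangular faces and the remaining two faces have degree $3m+1$, every vertex is incident with a face of degree $3m+1$ and with at most two triangles, so $F^{*}(G_{m})\ge\frac{3m+7}{3}\to\infty$. For $k=4$ I would run the analogous construction with the diamond replaced by a suitable planar class-$1$ gadget having two ports of degree $3$, chained and closed up the same way; checking $4$-criticality is a colouring argument parallel to the $k=3$ case.

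\emph{The upper bounds $b^{*}_{5}\le\frac{15}{2}$ and $b^{*}_{6}\le\frac{17}{5}$.} Assume $G$ is a $k$-critical planar graph ($k\in\{5,6\}$) and $(G,\Sigma)$ is an embedding with $F((G,\Sigma))>c$, where $c=\frac{15}{2}$ if $k=5$ and $c=\frac{17}{5}$ if $k=6$; then $\sum_{f\ni v}d(f)>c\,d(v)$ for every vertex $v$, and since the left side is an integer this yields a concrete lower bound on $\sum_{f\ni v}d(f)$ for each possible value of $d(v)\le k$. I would run a discharging argument on $(G,\Sigma)$ with initial charges chosen so that Euler's formula makes the total negative, e.g.\ $d(v)-4$ on each vertex $v$ and $d(f)-4$ on each face $f$, giving total $-8$; as $G$ is $2$-connected every face is a circuit, so the only negative charges are triangular faces ($-1$) and vertices of degree $2$ and $3$ ($-2$ and $-1$). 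The inequalities above guarantee that every triangular face and every low-degree vertex is surrounded by faces of large degree, and the rules push charge from those large faces to the negative faces and vertices. The essential structural input is Vizing's Adjacency Lemma, which forces the neighbours of a low-degree vertex to have high degree and thereby keeps low-degree vertices — and with them triangles — from clustering; for instance a face of degree $d$ carries at most $\lfloor d/2\rfloor$ low-degree vertices on its boundary. One then checks that after discharging every vertex and every face is non-negative, contradicting the total $-8$.

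\emph{The lower bound $b^{*}_{5}\ge 3+\frac15$.} It suffices to produce one $5$-critical planar graph $G$ together with an embedding in which the face degrees around every vertex average at least $\frac{16}{5}$; concretely, every $5$-vertex must lie on at most four triangles, every $4$-vertex on at most three, every $3$-vertex on at most two, and every $2$-vertex on at most one. I would take a small planar $5$-critical graph (for instance one arising from Vizing's construction of planar class-$2$ graphs with $\Delta=5$) and verify this inequality in the embedding whose bounded faces are as large as possible.

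\emph{Main obstacle.} The genuinely hard part is the discharging for the two upper bounds: the rules must withstand every local configuration compatible with Vizing's Adjacency Lemma — in particular faces of moderate degree bordering many triangles, and degree-$2$ vertices whose two incident faces are one triangle and one long face — and for $k=6$ there is the additional awkwardness of reasoning about graphs not known to exist, so that every structural fact used must be derived purely from criticality together with planarity.
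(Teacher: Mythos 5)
Your outline (explicit unbounded families for $k\in\{2,3,4\}$, an explicit example for the lower bound at $k=5$, discharging for the two upper bounds) matches the paper's strategy, and your diamond-chain construction for $k=3$ is a legitimate alternative to the paper's family (circular ladders with one subdivided edge). But there are three genuine gaps. First, for $k=4$ you invoke ``a suitable planar class-1 gadget with two ports of degree 3'' without exhibiting one or proving the colour-forcing property, and for both $k=3$ and $k=4$ criticality is asserted rather than proved; in the paper this is a real induction (reduce $G_n$ to $G_{n-2}$ by deleting two rungs and suppressing) combined with overfullness. Second, for $b^*_5\geq 3+\frac{1}{5}$ you do not name a graph: the paper builds an explicit family $T_m$ from concentric circuits of lengths $4,8,\dots,8,4$, computes $F((T_m,\Sigma_m))=3+\frac{1}{5}$, and proves criticality by decomposing the edge set into five perfect matchings; ``a small $5$-critical planar graph from Vizing's construction'' with the required embedding is not identified, and verifying criticality of any candidate is itself nontrivial.

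The serious gap is the $k=6$ upper bound. You propose the charge $d-4$ on vertices and faces (total $-8$) and plan to push charge from large faces onto triangles. But at threshold $3+\frac{2}{5}$ a $6$-vertex may be incident with five triangles and one $6$-face, so almost every face can be a triangle carrying charge $-1$ while large faces are scarce; it is far from clear that such a scheme can be balanced, and you give no rules. The paper sidesteps this entirely by taking $ch(v)=-1$ and $ch(f)=2d_G(f)-6$, so that every face --- triangles included --- starts nonnegative and only vertices are in deficit; the price is that the total charge is then $5|V(G)|-2|E(G)|-12$, which is \emph{not} automatically negative, and the proof must invoke the Luo--Miao--Zhao bound $|E(G)|\geq\frac{1}{2}(5|V(G)|+3)$ for $6$-critical graphs to force the total below $-15$. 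Your plan contains no such density input. Relatedly, for $k=5$ the actual rules are much more delicate than ``large faces feed small objects'': charge is routed through $5$-vertices to their $2$-, $3$- and $4$-neighbours in amounts depending on $r$, and verifying the $5$-vertices stay nonnegative needs Zhang's lemma and the Sanders--Zhao adjacency lemma, not only Vizing's Adjacency Lemma. As written, the proposal establishes the $k=2$ and (essentially) $k=3$ cases and correctly frames the rest, but the upper bounds and the two remaining constructions are not yet proofs.
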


Vizing \cite{Vizing_1965} proved that a class 2 graph contains $k$-critical subgraph for each $k \in \{2, \dots,\Delta\}$. 
Hence a smallest counterexample to Vizing's conjecture is critical and thus, our results for $k=6$
partially characterize smallest counterexamples to this conjecture. For $k \leq 5$, they provide insight into the structure of 
planar critical graphs.
Seymour's exact conjecture \cite{Stiebitz_2012} says that every critical planar graph is overfull.
If this conjecture is true for $k\in \{3,4,5\}$, then $\overline{b}_k$ is equal to the lower bound given in Theorem \ref{Main_global}.

It is not clear whether $\overline{b}_k$ and $b_k^*$ or $\overline{F}(G)$ and $F^*{(G)}$ are related to each other, respectively. 
Furthermore, the precise values of $\overline{b}_k$ and $b^*_k$ are also unknown.

The next section states some properties of critical and of planar graphs. These results are
used for the proofs of Theorems \ref{Main_global} and \ref{MAIN} which are given in Section \ref{proofs}.

\section{Preliminaries}

Let $G$ be a 2-connected graph. A vertex $v$ is called a \textit{$k$-vertex}, or a \textit{$k^+$-vertex}, or a \textit{$k^-$-vertex} if $d_G(v)=k$, or $d_G(v)\geq k$, or $d_G(v)\leq k$, respectively.
Let $N(v)$ be the set of vertices which are adjacent to $v$, and $N(S)=\bigcup_{v\in S}N(v)$ for a set $S\subseteq V(G)$.
We write $N(v)$ and $N(u,v)$ short for $N(\{v\})$ and $N(\{u,v\})$, respectively.

Let $(G,\Sigma)$ be a plane graph. A face $f$ is called \textit{$k$-face}, or a \textit{$k^+$-face}, or a \textit{$k^-$-face}, if $d_{(G,\Sigma)}(f)=k$, or $d_{(G,\Sigma)}(f)\geq k$, or
$d_{(G,\Sigma)}(f)\leq k$, respectively. We will use the following well-known results on critical graphs.

\begin{lemma} \label{trivial_statement}
	Let $G$ be a critical graph and $e \in E(G)$. If $e = xy$, then $d_G(x) \geq 2$, and $d_G(x)+d_G(y) \geq \Delta(G)+2$.
\end{lemma}

\begin{lemma} [Vizing's Adjacency Lemma \cite{Vizing_1964}] \label{VAL}
	Let $G$ be a critical graph. If $e=xy \in E(G)$, then at least $\Delta(G)-d_G(y)+1$ vertices in $N(x)\setminus \{y\}$ have degree  $\Delta(G)$.
\end{lemma}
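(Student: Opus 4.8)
The plan is to combine the definition of criticality with a Kempe-chain recoloring (Vizing fan) argument. Write $k=\Delta(G)$. Since $G$ is critical and $xy\in E(G)$, the proper subgraph $H=G-xy$ satisfies $\chi'(H)\le\chi'(G)-1=k$; fix a proper edge-coloring $\pi$ of $H$ with colors $\{1,\dots,k\}$ and, for $v\in V(H)$, let $\overline\pi(v)$ denote the set of colors not used on an edge at $v$. Only the two vertices $x$ and $y$ lose an incident edge in passing from $G$ to $H$, so $|\overline\pi(x)|=k-d_G(x)+1\ge 1$ and $|\overline\pi(y)|=k-d_G(y)+1\ge 1$. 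The first step, which I would record at once, is that $\overline\pi(x)\cap\overline\pi(y)=\emptyset$: a color missing at both $x$ and $y$ could be assigned to $xy$, producing a $k$-coloring of $G$ and contradicting that $G$ is class 2.

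Hence each of the $k-d_G(y)+1$ colors in $\overline\pi(y)$ occurs at $x$: for $c\in\overline\pi(y)$ let $z_c$ be the neighbor of $x$ with $\pi(xz_c)=c$. Distinct colors give distinct edges at $x$, hence distinct vertices $z_c$, and $z_c\ne y$ because $xy$ is uncolored in $H$; thus $\{z_c : c\in\overline\pi(y)\}$ is a set of $k-d_G(y)+1$ distinct vertices in $N(x)\setminus\{y\}$, and it suffices to show $d_G(z_c)=k$ for every such $c$. Suppose not: fix $c\in\overline\pi(y)$, put $z=z_c$, assume $d_G(z)<k$, and pick $\delta\in\overline\pi(z)$ and $\beta\in\overline\pi(x)$. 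We may assume $\overline\pi(x)\cap\overline\pi(z)=\emptyset$, since a color missing at both $x$ and $z$ could be placed on the edge $xz$ (currently colored $c$), after which $c$ is missing at both $x$ and $y$ and can be put on $xy$; in particular $\delta\notin\overline\pi(x)$, so $\delta\ne\beta$ and $x$ has a $\delta$-edge. Now I would follow the two colors $\beta,\delta$: since $\beta\in\overline\pi(x)$, the vertex $x$ is an endpoint of the subgraph formed by the edges colored $\beta$ or $\delta$, so there is a maximal $(\beta,\delta)$-alternating path $P$ starting at $x$, beginning with the $\delta$-edge at $x$. If $z\notin P$, swapping $\beta$ and $\delta$ along $P$ makes $\delta$ missing at $x$ while leaving $\overline\pi(z)$ unchanged, so $xz$ can be recolored $\delta$ and then $xy$ colored $c$ — a contradiction.

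The remaining case, in which $P$ is a path from $x$ to $z$, is the delicate one and the main obstacle: a direct swap along $P$ no longer frees a color that is simultaneously missing at $x$ and at the vertex that needs it, and one must keep track of how the alternating chains through $x$ and $z$ interact with $y$. The standard device here is to run the argument not on the single edge $xz$ but on a \emph{maximal} Vizing fan at $x$ rooted at the uncolored edge $xy$, i.e.\ a maximal sequence $y=v_0,v_1,\dots,v_p$ of distinct neighbors of $x$ with $\pi(xv_{i+1})\in\overline\pi(v_i)$ for $0\le i<p$: rotating the fan turns any $xv_i$ into the uncolored edge while keeping the rest of $\pi$ proper, so maximality forces $\overline\pi(x)\cap\overline\pi(v_i)=\emptyset$ for all $i$ and constrains which colors can be missing at $v_p$; a single Kempe swap between a color missing at $v_p$ and a color missing at $x$ then produces a proper $k$-coloring of $G$, the desired contradiction. (Lemma~\ref{trivial_statement}, equivalently $|\overline\pi(x)|+|\overline\pi(y)|\le k$, is consistent with all of this but is not needed.) Once $d_G(z_c)=k$ has been established for every $c\in\overline\pi(y)$, the $k-d_G(y)+1$ vertices $z_c$ are the required neighbors of $x$ of degree $\Delta$, completing the proof.
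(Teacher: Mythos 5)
The paper offers no proof of this lemma --- it is quoted as Vizing's Adjacency Lemma with a citation to \cite{Vizing_1964} --- so there is nothing in-paper to compare against and I assess your argument on its own terms. Your setup is correct and standard: $G-xy$ is $k$-edge-colorable by criticality, $\overline{\pi}(x)\cap\overline{\pi}(y)=\emptyset$, the $k-d_G(y)+1$ vertices $z_c$ are distinct and lie in $N(x)\setminus\{y\}$, and the reduction to showing $d_G(z_c)=k$, together with the easy Kempe case ($z\notin P$), is handled correctly.

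The gap is precisely at the point you yourself flag as ``the main obstacle.'' When the $(\beta,\delta)$-chain from $x$ ends at $z$, you substitute a description of ``the standard device'' for an argument, and the description is not correct in this setting: you assert that for a maximal fan ``a single Kempe swap between a color missing at $v_p$ and a color missing at $x$ then produces a proper $k$-coloring of $G$.'' That is the template for proving Vizing's theorem with $\Delta+1$ colors, where every fan vertex is guaranteed a missing color; here only $\Delta$ colors are available, $G$ is class 2, and no sequence of fan rotations and swaps can ever produce a $\Delta$-coloring of $G$ --- in a critical graph the maximal fan typically just terminates at a vertex with no missing color. The argument only closes because of the hypothesis $d_G(z_c)<\Delta$, and you never show where that hypothesis enters or why the terminal swap is then available. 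What is missing is one of the two standard completions: either (a) finish the two-vertex analysis --- prove $\overline{\pi}(z)\cap\overline{\pi}(y)=\emptyset$ as well, rotate ($xy\mapsto c$, uncolor $xz$), and track how the $(\beta,\delta)$- and $(\beta,c)$-chains through $x$, $y$, $z$ interact to force a recoloring; or (b) the multifan counting route --- show that for a maximal multifan $(y=v_0,\dots,v_p)$ at $x$ the sets $\overline{\pi}(x),\overline{\pi}(v_0),\dots,\overline{\pi}(v_p)$ are pairwise disjoint, observe that by maximality every color missing at some $v_j$ is the color of one of the $p$ fan edges, deduce $(\Delta-d_G(y)+1)+\sum_{j\ge 1}(\Delta-d_G(v_j))\le p$, and conclude by pigeonhole. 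Note that route (b) proves the lemma without the pointwise claim $d_G(z_c)=\Delta$ that your outline commits to; if you insist on the pointwise claim, the chain analysis in (a) is unavoidable and must actually be written out.
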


\begin{lemma} [\cite{Zhang_2000}] \label{Zhang_lemma}
	Let $G$ be a critical graph and $xy \in E(G)$. If $d(x)+d(y) = \Delta(G)+2$, then
	\begin{enumerate}
		\item every vertex of $N(x,y) \setminus \{x,y\}$ is a $\Delta(G)$-vertex,
		\item every vertex in $N(N(x,y)) \backslash \{x,y\}$ has degree at least $\Delta(G)-1$,
		\item if $d(x)< \Delta(G)$ and $d(y)< \Delta(G)$, then every vertex in $N(N(x,y)) \backslash \{x,y\}$ has degree $\Delta(G)$.
	\end{enumerate}
\end{lemma}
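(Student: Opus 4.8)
The plan is to argue by edge\nobreakdash-colouring extension, using criticality. Write $\Delta=\Delta(G)$, let $[\Delta]$ denote the colour set, and fix a proper $\Delta$-edge-colouring $\varphi$ of $G-xy$ (which exists since $G$ is critical). For a vertex $v$ let $\overline{\varphi}(v)\subseteq[\Delta]$ be the set of colours missing at $v$, so $|\overline{\varphi}(v)|$ is $\Delta$ minus the degree of $v$ in $G-xy$; thus $|\overline{\varphi}(x)|=\Delta-d(x)+1$, $|\overline{\varphi}(y)|=\Delta-d(y)+1$, and $|\overline{\varphi}(v)|=\Delta-d(v)$ for $v\notin\{x,y\}$. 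Since $\varphi$ does not extend to $xy$ we have $\overline{\varphi}(x)\cap\overline{\varphi}(y)=\emptyset$, and the hypothesis $d(x)+d(y)=\Delta+2$ gives $|\overline{\varphi}(x)|+|\overline{\varphi}(y)|=\Delta$. Hence $\{\overline{\varphi}(x),\overline{\varphi}(y)\}$ is a partition of $[\Delta]$: every colour appearing at $x$ lies in $\overline{\varphi}(y)$, and every colour appearing at $y$ lies in $\overline{\varphi}(x)$. This partition property, which holds for every choice of $\varphi$, is what the whole proof rests on.

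For part (1), I would take $z\in N(x,y)\setminus\{x,y\}$; by the symmetry between $x$ and $y$ assume $z\in N(x)$ and put $\alpha=\varphi(xz)$, so $\alpha\in\overline{\varphi}(y)$. Suppose $d(z)<\Delta$ and pick $\beta\in\overline{\varphi}(z)$, so $\beta\neq\alpha$. Uncolouring $xz$ and then colouring $xy$ with $\alpha$ (legal, since $\alpha$ becomes free at $x$ and $\alpha\in\overline{\varphi}(y)$) gives a proper $\Delta$-colouring $\varphi_1$ of $G-xz$ with $\alpha,\beta\in\overline{\varphi_1}(z)$ and $\overline{\varphi_1}(x)=\overline{\varphi}(x)$. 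If $\beta\in\overline{\varphi}(x)$, colouring $xz$ with $\beta$ yields a $\Delta$-colouring of $G$, a contradiction; so $\beta\in\overline{\varphi}(y)$. Fix $\mu\in\overline{\varphi}(x)$; we may assume $\mu\notin\overline{\varphi_1}(z)$ (else colour $xz$ with $\mu$). Now consider the $(\mu,\beta)$-Kempe chain $P$ of $\varphi_1$ starting at $z$. If $x\notin P$, interchanging the two colours along $P$ frees $\mu$ at $z$ while keeping it free at $x$, so $xz$ can be coloured with $\mu$, a contradiction. If $x\in P$ then, since $x$ has a $\beta$-edge but no $\mu$-edge, $x$ is the other endpoint of $P$, whence $y\notin P$; interchanging along $P$ now frees $\beta$ at both $x$ and $y$, recolouring $xy$ with $\beta$ frees $\alpha$ at $x$, and since $\alpha$ is still free at $z$ we colour $xz$ with $\alpha$, again a contradiction. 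Therefore $d(z)=\Delta$, which is part (1).

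For parts (2) and (3), let $w\in N(N(x,y))\setminus\{x,y\}$; we may assume $w\notin N(x,y)$, since otherwise part (1) already gives $d(w)=\Delta$. Then $w$ is adjacent to some $z\in N(x,y)\setminus\{x,y\}$, and part (1) gives $d(z)=\Delta$; say $z\in N(x)$ with $\alpha=\varphi(xz)\in\overline{\varphi}(y)$. Passing again to the colouring $\varphi_1$ of $G-xz$, in which now $\overline{\varphi_1}(z)=\{\alpha\}$ and $xy$ carries colour $\alpha$, the idea is to recolour the edge $zw$ together with the edges of a fan at $x$ (forced to begin with $xy$, because the unique colour missing at $z$ is $\alpha$, the colour of $xy$) so as to create a common missing colour at $x$ and $z$ and extend the colouring to $xz$. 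Each colour missing at $w$ buys one Kempe interchange's worth of freedom for this construction, and the bookkeeping should show that a single colour missing at $w$ already leads to a proper $\Delta$-colouring of $G$; hence $d(w)\geq\Delta-1$, which is part (2). Under the extra hypothesis $d(x),d(y)<\Delta$ of part (3) both $\overline{\varphi}(x)$ and $\overline{\varphi}(y)$ have at least two elements, which supplies one more interchange along the fan at $x$, and this additional room should exclude even one colour missing at $w$, giving $d(w)=\Delta$.

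The delicate part throughout is the Kempe-chain bookkeeping: each interchange must be checked not to disturb the colours on the edges at $x$ and $y$ that witness the partition of $[\Delta]$ (in practice one repeatedly uses that $y$ is an isolated vertex of the relevant two-colour subgraph, so the chain in question cannot run through $y$), and after each interchange one must record into which side of the partition, $\overline{\varphi}(x)$ or $\overline{\varphi}(y)$, a newly freed colour falls. For part (1) this is short, as sketched above; for parts (2) and (3) it is the main work, since the fan-and-chain construction has to be iterated with the accounting kept consistent, and the precise slack ($d(w)\geq\Delta-1$ versus $d(w)=\Delta$) will emerge only from that accounting. Everything else reduces to the defining property of a critical graph, namely that no proper subgraph of $G$ needs $\Delta+1$ colours.
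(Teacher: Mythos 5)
The paper does not prove this lemma; it is quoted from Zhang \cite{Zhang_2000}, so your attempt has to be judged on its own terms rather than against a proof in the text. Your part (1) is correct and complete: the observation that $\overline{\varphi}(x)$ and $\overline{\varphi}(y)$ are disjoint and of sizes summing to $\Delta$, hence partition the colour set, followed by the $(\mu,\beta)$-Kempe-chain case analysis (with the check that $y$ cannot be an interior vertex of the chain), does establish that every vertex of $N(x,y)\setminus\{x,y\}$ is a $\Delta$-vertex. It is worth noting, though, that part (1) is an immediate consequence of Vizing's Adjacency Lemma (Lemma \ref{VAL} of the paper): with $d(x)+d(y)=\Delta+2$, that lemma forces all $d(x)-1$ vertices of $N(x)\setminus\{y\}$ to have degree $\Delta$, so your Kempe-chain work there, while sound, is more than is needed.

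Parts (2) and (3), which are the real content of the lemma, are not proved. What you offer is a plan --- ``the idea is to recolour the edge $zw$ together with the edges of a fan at $x$ \dots the bookkeeping should show \dots this additional room should exclude \dots'' --- with the decisive steps explicitly deferred. Moreover, the accounting you announce is internally inconsistent: you write that ``a single colour missing at $w$ already leads to a proper $\Delta$-colouring of $G$; hence $d(w)\geq\Delta-1$.'' If assuming one missing colour at $w$ produced a contradiction, the conclusion would be that $w$ has no missing colour, i.e.\ $d(w)=\Delta$, which is part (3) without its extra hypothesis and is false in general. To obtain part (2) you must derive the contradiction from the assumption that $w$ has at least \emph{two} missing colours (equivalently $d(w)\leq\Delta-2$), and for part (3) from at least one missing colour, using the additional room $|\overline{\varphi}(x)|\geq 2$ and $|\overline{\varphi}(y)|\geq 2$. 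The fan-and-chain construction that realises this --- routing the Vizing fan at $x$ through $xy$ (forced, since $z$ misses only $\alpha=\varphi(xz)$ once $d(z)=\Delta$ is known), transferring a colour missing at $w$ onto $zw$, and verifying at each interchange that the partition $\overline{\varphi}(x)\cup\overline{\varphi}(y)=[\Delta]$ and the colour of $xy$ survive --- is precisely the hard part of Zhang's lemma, and none of it is carried out. As it stands, the proposal proves only part (1).
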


\begin{lemma}  [\cite{Sanders_Zhao_2001}] \label{SZ}
	No critical graph has pairwise distinct vertices $x, y,z$, such that $x$ is adjacent to $y$ and $z$, $d(z)<2\Delta(G)-d(x)-d(y)+2$, and xz is in at least $d(x)+d(y)-\Delta(G)-2$ triangles not containing $y$.
\end{lemma}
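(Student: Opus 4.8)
The plan is to argue by contradiction using a standard critical-graph recolouring argument built on Vizing fans and Kempe chains. Write $\Delta=\Delta(G)$, $d(\cdot)=d_G(\cdot)$, and set $t=d(x)+d(y)-\Delta-2$; note $t\geq 0$ by Lemma~\ref{trivial_statement}. Suppose $G$ is critical with distinct vertices $x,y,z$ for which $xy,xz\in E(G)$, $d(z)<2\Delta-d(x)-d(y)+2$, and $xz$ lies in triangles $xzz_1,\dots,xzz_t$, the $z_i$ being distinct common neighbours of $x$ and $z$, all different from $y$. Since $G$ is critical, $G-xy$ has a proper $\Delta$-edge-colouring $\phi$; for a vertex $v$ let $\overline{\phi}(v)\subseteq\{1,\dots,\Delta\}$ denote the colours missing at $v$, so $|\overline{\phi}(x)|=\Delta-d(x)+1$ and $|\overline{\phi}(y)|=\Delta-d(y)+1$. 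If some colour lay in $\overline{\phi}(x)\cap\overline{\phi}(y)$, colouring $xy$ with it would extend $\phi$ to a $\Delta$-colouring of $G$, contradicting that $G$ is class~$2$; hence $\overline{\phi}(x)\cap\overline{\phi}(y)=\emptyset$ and $|\overline{\phi}(x)|+|\overline{\phi}(y)|=2\Delta-d(x)-d(y)+2>d(z)$. Since $d(x)+d(y)\geq\Delta+2$ this also gives $d(z)<\Delta$, so $\overline{\phi}(z)\neq\emptyset$.

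Next I would record the two combinatorial facts that will collide. As $z$ has exactly $d(z)<|\overline{\phi}(x)\cup\overline{\phi}(y)|$ colours present at it, at least $|\overline{\phi}(x)|+|\overline{\phi}(y)|-d(z)\geq 1$ colours are missing at $z$ and lie in $\overline{\phi}(x)\cup\overline{\phi}(y)$; fix one such colour $\gamma$. On the other hand, writing $c_0=\phi(xz)$, the colours $c_0,\phi(xz_1),\dots,\phi(xz_t)$ are $t+1$ distinct colours present at $x$ and $c_0,\phi(zz_1),\dots,\phi(zz_t)$ are $t+1$ distinct colours present at $z$, so the triangles rigidly constrain the palette around $x$ and around $z$. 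I would then replace $\phi$ by a $\Delta$-colouring of $G-xy$ chosen extremally — for instance one maximising the number of indices $i$ with $\phi(zz_i)\in\overline{\phi}(x)$ — so as to make the triangle structure as favourable as possible for the recolouring to come.

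The core step is a fan/Kempe-chain recolouring at $x$. Build a maximal Vizing fan $F=(y=y_0,y_1,\dots,y_s)$ of distinct neighbours of $x$ with $\phi(xy_i)\in\overline{\phi}(y_{i-1})$ for $i\geq 1$, and invoke the standard rotation facts: the sets $\overline{\phi}(y_i)$ are pairwise disjoint and each disjoint from $\overline{\phi}(x)$, for otherwise a rotation of $F$ followed by colouring one fan edge and then $xy$ would extend $\phi$ to $G$ (this is the mechanism underlying Lemma~\ref{VAL}). The colour $\gamma$ is then fed in according to where it is missing: if $\gamma\in\overline{\phi}(x)\cap\overline{\phi}(z)$ one recolours $xz$ by $\gamma$ first, freeing $c_0$ at $x$ and supplying an extra colour there to drive the fan; if $\gamma\in\overline{\phi}(y)\cap\overline{\phi}(z)$ one swaps a $(\gamma,\alpha)$-Kempe chain with $\alpha\in\overline{\phi}(x)$ to detach $y$ from $x$. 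In either case the surplus of colours missing at $z$, together with the $t+1$ colours the triangles force onto the edges $xz_i$ and $zz_i$, forces one of the fan-forbidden configurations to appear — a vertex $y_j$ ($j\geq 1$) with $\overline{\phi}(y_j)\cap\overline{\phi}(x)\neq\emptyset$, or two fan vertices sharing a missing colour — and hence, after one more rotation and a Kempe swap, produces a $\Delta$-colouring of $G$, the desired contradiction.

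The hard part will be exactly this last bookkeeping: one must verify that the Kempe chains used at $z$ and along the triangles $xzz_i$ are vertex-disjoint from the two-coloured path joining $x$ and $y$ that blocks colouring $xy$, and that the fan invariants ($\overline{\phi}(x)\cap\overline{\phi}(y)=\emptyset$ and disjointness of the $\overline{\phi}(y_i)$) survive every rotation. It is here that $t=d(x)+d(y)-\Delta-2$ is exactly the number of triangles required, being what balances the $|\overline{\phi}(x)|=\Delta-d(x)+1$ colours missing at $x$ against the $d(z)<|\overline{\phi}(x)|+|\overline{\phi}(y)|$ edges at $z$. Concretely, I would organise the proof as a short chain of claims of the form ``either the current colouring already extends to $G$, or it can be altered so that one further edge $zz_i$ carries a colour of $\overline{\phi}(x)$'', driving the number of ``bad'' triangle edges down to zero and then applying the fan facts above; each such claim is a routine Kempe swap, so I would not grind through them here.
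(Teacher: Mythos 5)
The paper does not prove this lemma at all: it is quoted verbatim from Sanders and Zhao \cite{Sanders_Zhao_2001} as a known adjacency result, so there is no in-paper proof to match your argument against. Judged on its own terms, your proposal has a genuine gap. The setup is sound --- removing $xy$, noting $\overline{\phi}(x)\cap\overline{\phi}(y)=\emptyset$, counting $|\overline{\phi}(x)|+|\overline{\phi}(y)|=2\Delta-d(x)-d(y)+2>d(z)$ to extract a colour $\gamma$ missing at $z$ and lying in $\overline{\phi}(x)\cup\overline{\phi}(y)$, and observing that the $t+1$ edges $xz,xz_1,\dots,xz_t$ and $zz,zz_1,\dots$ pin down colours at $x$ and $z$ --- but everything after that is asserted rather than proved. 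The sentence claiming that the surplus at $z$ together with the triangle colours ``forces one of the fan-forbidden configurations to appear'' is precisely the content of the lemma; it is the step where the hypothesis ``at least $d(x)+d(y)-\Delta-2$ triangles not containing $y$'' must actually be used, and you never exhibit the recolouring that realises it. Likewise, the extremal choice of $\phi$ (maximising the number of $i$ with $\phi(zz_i)\in\overline{\phi}(x)$) is introduced but never exploited, and the disjointness of the Kempe chains at $z$ from the $(\alpha,\beta)$-path blocking $xy$ --- which you yourself flag as ``the hard part'' --- is exactly where such arguments typically break and cannot be waved through as ``a routine Kempe swap.''

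To turn this into a proof you would need to make the inductive claim precise: state exactly which colour classes are swapped on which chains, verify after each swap that $\overline{\phi}(x)\cap\overline{\phi}(y)$ remains empty and that the fan built from $y$ retains its invariants, and show that after at most $t$ such modifications either $xy$ becomes colourable or two fan vertices share a missing colour. Since none of this bookkeeping is carried out, the proposal is a plausible strategy outline in the spirit of the original Sanders--Zhao argument, not a proof.
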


We will use the following results on lower bounds for the number of edges in critical graphs.

\begin{theorem} [\cite{Jakobsen_1974}] \label{Jakobsen}
	If $G$ is a $3$-critical graph, then $|E(G)| \geq \frac{4}{3}|V(G)|$.
\end{theorem}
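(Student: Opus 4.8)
The plan is to reduce the edge-count inequality to a statement about how many vertices of each degree $G$ can have, and then to use Vizing's Adjacency Lemma to control the local structure around $2$-vertices. Since $G$ is $3$-critical it is in particular $2$-connected (so $\delta(G)\geq 2$) with $\Delta(G)=3$, hence every vertex of $G$ has degree $2$ or $3$; write $n_2$ and $n_3$ for the numbers of $2$-vertices and $3$-vertices, so that $|V(G)| = n_2+n_3$ and $2|E(G)| = 2n_2+3n_3$. Then $3|E(G)| = \tfrac32(2n_2+3n_3) = 3n_2+\tfrac92 n_3$ and $4|V(G)| = 4n_2+4n_3$, so $3|E(G)|-4|V(G)| = \tfrac12 n_3 - n_2$. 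Thus the desired bound $|E(G)|\geq \tfrac43|V(G)|$ is equivalent to $n_3\geq 2n_2$, and it suffices to prove the latter (which, for $n_2=0$, is immediate).

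To prove $n_3\geq 2n_2$ I would count in two ways the edges joining a $2$-vertex to a $3$-vertex. First, by Lemma \ref{trivial_statement} every edge $xy$ satisfies $d_G(x)+d_G(y)\geq \Delta(G)+2 = 5$, so no two $2$-vertices are adjacent; hence every edge incident with a $2$-vertex goes to a $3$-vertex, and the number of such edges is exactly $2n_2$. Second, I claim each $3$-vertex is incident with at most one such edge. Indeed, suppose $x$ is a $3$-vertex adjacent to a $2$-vertex $v$. Applying Vizing's Adjacency Lemma (Lemma \ref{VAL}) to the edge $xv$, at least $\Delta(G)-d_G(v)+1 = 2$ of the two vertices of $N(x)\setminus\{v\}$ have degree $\Delta(G)=3$; therefore both do, and $x$ has no second neighbour of degree $2$. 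Comparing the two counts yields $2n_2\leq n_3$, and the theorem follows.

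I do not expect a serious obstacle here: the content is entirely in recognizing that the correct parameter to bound is the ratio between the number of $3$-vertices and the number of $2$-vertices, and that the Adjacency Lemma supplies precisely the needed fact that a $3$-vertex "sees" at most one $2$-vertex; the rest is bookkeeping. If one insisted on a proof not quoting Lemma \ref{VAL}, one could instead reprove the single instance of the Adjacency Lemma that is used by a fan/Kempe-chain recoloring argument on the edge $xv$ in a hypothetical optimal $\Delta$-edge-colouring of $G-xv$, but since Lemma \ref{VAL} is already available this detour is unnecessary.
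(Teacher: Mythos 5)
Your argument is correct. Note, however, that the paper offers no proof of this statement at all: Theorem \ref{Jakobsen} is quoted from Jakobsen's 1974 paper as a known result, so there is nothing internal to compare against. What you have produced is a short self-contained derivation from two lemmas the paper does state (Lemma \ref{trivial_statement} and Vizing's Adjacency Lemma, Lemma \ref{VAL}). The bookkeeping checks out: with $n_2,n_3$ the numbers of $2$- and $3$-vertices, $3|E(G)|-4|V(G)|=\tfrac12 n_3-n_2$, so the bound is indeed equivalent to $n_3\geq 2n_2$; Lemma \ref{trivial_statement} forbids adjacent $2$-vertices, so exactly $2n_2$ edges join a $2$-vertex to a $3$-vertex; and Lemma \ref{VAL} applied to an edge $xv$ with $d_G(v)=2$ forces both remaining neighbours of the $3$-vertex $x$ to have degree $3$, so each $3$-vertex absorbs at most one such edge, giving $2n_2\leq n_3$. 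The bound is tight for the Petersen graph minus a vertex ($n_2=3$, $n_3=6$, $|E|=12=\tfrac43\cdot 9$), which is consistent with the Cranston--Rabern improvement quoted in the concluding remarks excluding exactly that graph. The one thing your write-up glosses over is that $\delta(G)\geq 2$; this is justified by the paper's remark that critical graphs are $2$-connected (or directly by Lemma \ref{trivial_statement}), so it is not a gap, but it is worth citing explicitly.
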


\begin{theorem} [\cite{Woodall_2008}] \label{Woodall}
	Let $G$ be a $k$-critical graph. If $k = 4$, then $|E(G| \geq \frac{12}{7}|V(G)|$, and if $k = 5$, then $|E(G)| \geq \frac{15}{7}|V(G)|$.
\end{theorem}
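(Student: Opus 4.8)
The plan is to prove both bounds by a discharging argument on the vertices. Write $\Delta = k$ and put $c = \tfrac{24}{7}$ when $k = 4$ and $c = \tfrac{30}{7}$ when $k = 5$; since $2|E(G)| = \sum_{v \in V(G)} d_G(v)$, it suffices to show $\sum_{v \in V(G)} (d_G(v) - c) \geq 0$. Assign to each vertex $v$ the initial charge $\mu(v) = d_G(v) - c$, and then redistribute charge from $\Delta$-vertices (which carry positive charge $\Delta - c$, namely $\tfrac{4}{7}$ for $k=4$ and $\tfrac{5}{7}$ for $k=5$) toward vertices of smaller degree, keeping the total fixed. The goal is to choose the redistribution so that the final charge $\mu^*(v)$ is nonnegative for every $v$; summing over $V(G)$ then gives the inequality, hence $|E(G)| \geq \tfrac{c}{2}|V(G)|$.

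First I would extract the local structure forced by criticality. By Lemma \ref{trivial_statement} the minimum degree is $2$ and the endpoints of any edge have degree sum at least $\Delta + 2$. Combined with Vizing's Adjacency Lemma (Lemma \ref{VAL}) this yields, for $k = 4$: every $2$-vertex has both neighbours of degree $4$; every $3$-vertex has at least two neighbours of degree $4$; and, applying Lemma \ref{VAL} at a $4$-vertex $y$ adjacent to a $2$-vertex, all three remaining neighbours of $y$ have degree $4$. For $k = 5$ the analogous consequences are that a $2$-vertex is adjacent only to $5$-vertices, a $3$-vertex only to $4$- and $5$-vertices with at least two of degree $5$, a $4$-vertex to at least two $5$-vertices, and a $5$-vertex adjacent to a $2$-vertex has all its other neighbours of high degree. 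Lemmas \ref{Zhang_lemma} and \ref{SZ} supply the finer information — how many low-degree vertices can appear in the first and second neighbourhood of a given $\Delta$-vertex, and how triangles through such a vertex are constrained — that is needed to keep the discharging consistent.

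The discharging rules then send a fixed amount of charge from each $\Delta$-vertex to each adjacent vertex of smaller degree, together with a ``second-order'' rule in which a $\Delta$-vertex adjacent to a $\Delta$-vertex $y$ that is itself incident with a $2$-vertex forwards some charge to $y$, which $y$ passes on. The amounts are calibrated so that, using the structural bounds above, no $\Delta$-vertex gives away more than its surplus $\Delta - c$, while every smaller-degree vertex ends up nonnegative: a $2$-vertex must collect $\tfrac{10}{7}$ and a $3$-vertex $\tfrac{3}{7}$ when $k = 4$, and $\tfrac{16}{7}$, $\tfrac{9}{7}$, $\tfrac{2}{7}$ respectively for $2$-, $3$-, $4$-vertices when $k = 5$. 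Checking $\mu^*(v) \geq 0$ is then a finite case analysis over the possible local neighbourhood types of $v$, the $\Delta$-vertex case being the one that consumes the structural lemmas most heavily.

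The main obstacle is the $2$-vertex case. A $2$-vertex needs $\tfrac{10}{7}$ (resp.\ $\tfrac{16}{7}$) of charge, which exceeds the combined surplus $\tfrac{8}{7}$ (resp.\ $\tfrac{10}{7}$) of its two $\Delta$-neighbours, so charge must be routed from the second neighbourhood; the delicate point is to use Lemmas \ref{VAL}, \ref{Zhang_lemma} and \ref{SZ} to guarantee that the $\Delta$-vertices at distance two are numerous enough and are not already over-committed to other low-degree vertices (for instance, that a $\Delta$-vertex near a $2$-vertex lies deep inside a cluster of $\Delta$-vertices). Balancing these competing demands is precisely what pins down the constants $\tfrac{12}{7}$ and $\tfrac{15}{7}$, and making the bookkeeping tight enough to reach these exact values — rather than some weaker linear bound — is the crux of the argument. \EndeB
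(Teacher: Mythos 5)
There is a genuine gap: what you have written is a strategy outline, not a proof. You correctly reduce the statement to showing $\sum_{v}(d_G(v)-c)\geq 0$ for $c=\tfrac{24}{7}$ resp.\ $\tfrac{30}{7}$, you compute the deficits correctly (a $2$-vertex needs $\tfrac{10}{7}$ resp.\ $\tfrac{16}{7}$, etc.), and the structural consequences you draw from Lemmas \ref{trivial_statement} and \ref{VAL} are all accurate. But the discharging rules themselves are never stated: no explicit amounts are assigned to the first-order or ``second-order'' transfers, and the case analysis verifying $\mu^*(v)\geq 0$ is never carried out. You explicitly flag the $2$-vertex case as the obstacle -- its demand exceeds the combined surplus of its two $\Delta$-neighbours, so charge must be pulled from the second neighbourhood without over-committing any $\Delta$-vertex -- and then state that resolving this ``is the crux of the argument'' without resolving it. A proof in which the crux is announced but not executed is not a proof.

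There is also a substantive reason to doubt that the plan closes as described. The paper does not prove this theorem; it is imported from Woodall's 2008 paper, and Woodall's argument for the exact constants $\tfrac{12}{7}$ and $\tfrac{15}{7}$ does not rest only on Vizing's Adjacency Lemma together with Lemmas \ref{Zhang_lemma} and \ref{SZ}: it requires strictly stronger adjacency lemmas (developed in that paper) controlling how many $\Delta$-vertices with few low-degree neighbours must appear around a deficient vertex. With only the lemmas quoted here, a discharging argument of the shape you describe will typically terminate at a weaker constant (one can get bounds near Vizing's conjectured average degree this way, but not $\tfrac{24}{7}$ and $\tfrac{30}{7}$). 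So to complete your proposal you would either need to prove those stronger adjacency lemmas yourself or exhibit explicit rules and verify, vertex type by vertex type, that the constants are attainable -- neither of which is present.
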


\begin{theorem} [\cite{Luo_Miao_Zhao_2009}] \label{Luo_etal}
	If $G$ is a $6$-critical graph, then $|E(G)| \geq \frac{1}{2}(5|V(G)|+3)$.
\end{theorem}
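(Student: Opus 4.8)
\textit{Plan.} The goal is $|E(G)|\ge\frac12(5|V(G)|+3)$ for every $6$-critical graph $G$ -- the case $\Delta=6$ of Vizing's conjectured edge bound, in the spirit of Theorems~\ref{Jakobsen} and~\ref{Woodall}. I would argue by discharging on $V(G)$. Rewrite the inequality as $\sum_{v\in V(G)}\bigl(d_G(v)-5\bigr)\ge 3$ and assign to each vertex the initial charge $\mu(v)=d_G(v)-5$. Since $\Delta(G)=6$, the only charges occurring are $+1$ on $6$-vertices, $0$ on $5$-vertices, and $-3,-2,-1$ on the \emph{small} vertices of degrees $2,3,4$; so it suffices to move charge from $6$-vertices to small vertices so that every vertex ends with charge $\ge 0$ and at least $3$ units of charge remain.

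First I would record the local structure forced by the hypotheses. By Lemma~\ref{trivial_statement} a neighbor of a $d$-vertex has degree at least $8-d$, so both neighbors of a $2$-vertex are $6$-vertices, every neighbor of a $3$-vertex is a $5^+$-vertex, and every neighbor of a $4$-vertex is a $4^+$-vertex. Feeding this into Vizing's Adjacency Lemma~\ref{VAL}: if $x$ has $d_G(x)\le 5$ and a neighbor $y$ with $d_G(y)\le 5$, then at least $7-d_G(y)\ge 2$ vertices of $N(x)\setminus\{y\}$ have degree $6$; a short check over the few degree patterns then shows that every small vertex of degree $d$ has at least two $6$-neighbors (a $2$-vertex has exactly two). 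Lemma~\ref{Zhang_lemma}, applied to an edge $xy$ with $d_G(x)+d_G(y)=8$ (e.g. a $2$-vertex beside a $6$-vertex, two adjacent $4$-vertices, or a $3$-vertex beside a $5$-vertex), forces all remaining neighbors to be $6$-vertices and all vertices at distance two to be $5^+$-vertices; in particular a $6$-vertex is adjacent to at most one $2$-vertex, and if it is, then its five other neighbors are $6$-vertices whose neighborhoods consist of $5^+$-vertices (apart possibly from that $2$-vertex itself). Lemma~\ref{SZ} rules out triangle-rich local patterns around small vertices.

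With this in hand I would use discharging rules that send $\tfrac{5-d}{2}$ from each $6$-vertex to every adjacent $d$-vertex with $d\in\{2,3,4\}$ (that is, $\tfrac32,1,\tfrac12$ respectively), together with a secondary rule by which a $6$-vertex that has been driven below $0$ -- which by the above can only happen because it feeds a $2$-vertex, and then its five other neighbors are "safe'' $6$-vertices -- recoups a small fixed amount from those five neighbors. The two easy checks are then: a small vertex of degree $d$ collects at least $2\cdot\tfrac{5-d}{2}=5-d$ (namely $3,2,1$ for $d=2,3,4$) and so ends $\ge 0$; and a $6$-vertex, whose number of small neighbors and hence total outlay is capped by Lemmas~\ref{Zhang_lemma} and~\ref{SZ}, ends $\ge 0$ after the recovery.

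The part I expect to be the real obstacle is twofold: bounding the total demand placed on a $6$-vertex that lies near several $2$-vertices, so that the secondary rule never overdraws it (this is exactly where the distance-two conclusions of Lemma~\ref{Zhang_lemma} must be pushed hardest), and -- harder -- extracting the \emph{exact} surplus $3$ rather than merely a positive amount. For the latter one argues that the discharging is never perfectly tight: around any small vertex, and along any edge joining two $6$-vertices, the full strength of Lemmas~\ref{Zhang_lemma} and~\ref{SZ} leaves a definite residue of charge, and since a critical graph is $2$-connected the single globally tight configuration -- the one that would yield only $2|E(G)|=5|V(G)|+2$ -- cannot occur. Summing the residues over a suitable set of witnesses gives $\sum_{v}\mu(v)\ge 3$, which is the assertion. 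Calibrating the quanta, the secondary rule, and the witness set so that this final accounting closes is the delicate step; everything before it is bookkeeping on top of the quoted adjacency lemmas.
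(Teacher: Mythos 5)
This statement is not proved in the paper at all: it is quoted verbatim from Luo, Miao and Zhao \cite{Luo_Miao_Zhao_2009} and used as a black box, so there is no in-paper argument to compare yours against. Judged on its own terms, your proposal is a plan rather than a proof, and the two places you yourself flag as ``the real obstacle'' are exactly where it breaks. First, the claim that a $6$-vertex can be driven below $0$ \emph{only} by feeding a $2$-vertex is false under your rules. Take a $6$-vertex $v$ with two $3$-neighbors $u_1,u_2$, each of whose three neighbors are $6$-vertices: Lemma~\ref{VAL} applied to $vu_1$ requires at least four $6$-vertices in $N(v)\setminus\{u_1\}$, which is satisfied, and Lemma~\ref{Zhang_lemma} does not even apply to any edge at $u_1$ or $u_2$ since no edge there has degree sum $8$. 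Then $v$ starts with charge $+1$ and sends out $1+1=2$, ending at $-1$, while your secondary (recovery) rule is only triggered by a $2$-neighbor. The same happens with one $3$-neighbor and one $4$-neighbor ($1+\tfrac12>1$), or with three $4$-neighbors ($\tfrac32>1$), none of which your quoted lemmas exclude. So the rule set as calibrated does not yield $ch^*\geq 0$ on $6$-vertices.

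Second, even if the rules were repaired so that every final charge is nonnegative, that only proves $\sum_v(d_G(v)-5)\geq 0$, i.e.\ $|E(G)|\geq\frac52|V(G)|$; the additive constant $3$ is the entire point of the theorem as stated, and your appeal to ``residues over a suitable set of witnesses'' together with $2$-connectivity is not an argument but a placeholder for one. The actual proof in \cite{Luo_Miao_Zhao_2009} is a lengthy case analysis that relies on structural lemmas about critical graphs going well beyond Lemmas~\ref{VAL}, \ref{Zhang_lemma} and \ref{SZ}, precisely because the naive adjacency-lemma discharging cannot be closed in the way you sketch. In short: right genre of argument, but the decisive steps are missing and the specific dichotomy your secondary rule relies on is incorrect.
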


\begin{lemma} \label{graphs} Let $t$ be a positive integer and $\epsilon > 0$.
	\begin{enumerate}
		\item For $k \in \{2,3,4\}$ there is a $k$-critical planar graph $G$ and $F^*(G) > t$.
		\item There is a $2$-critical planar graph $G$ with $ \overline{F}(G) > t$.
		\item There is a 3-critical planar graph $G$ such that  $ 6 - \epsilon < \overline{F}(G) < 6$.
		\item There is a 4-critical planar graph $G$  such that $ 4 - \epsilon < \overline{F}(G) < 4$.
		\item There is a 5-critical planar graph $G$, such that $3 + \frac{1}{3} - \epsilon < \overline{F}(G) < 3 + \frac{1}{3}$ and $F^*(G) \geq 3 + \frac{1}{5}$.
	\end{enumerate}
\end{lemma}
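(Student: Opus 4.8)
The lemma bundles five explicit existence claims, so the plan is, for each item, to write down a planar graph (or an infinite family, one member per $\epsilon$ or per $t$) and verify four things: its maximum degree, that it is class $2$, that it is critical, and the values of $\overline{F}$ and $F^*$. Throughout, class $2$ will be certified either by \emph{overfullness} (odd order and $|E|$ at the overfull threshold $\Delta\lfloor\tfrac12|V|\rfloor+1$) or, more generally, by exhibiting a subgraph $H$ with $\Delta(H)=\Delta(G)$ and $\chi'(H)>\Delta(H)$, which forces $\chi'(G)>\Delta(G)$. The value of $\overline{F}$ is read off from Euler's identity $\overline{F}(G)=\frac{2|E(G)|}{|E(G)|-|V(G)|+2}$, so that $\overline{F}(G)\to c$ whenever $|V(G)|\to\infty$ with average degree tending to $\frac{2c}{c-2}$, and $\overline{F}(G)<c$ exactly when the graph is slightly sparser than the corresponding regular value. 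Lower bounds on $F^*$ will come from exhibiting one embedding $\Sigma$ and estimating $\min_v F_{(G,\Sigma)}(v)$ in it.

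The case $k=2$ (items $1$ and $2$) is settled by the odd cycle $C_{2n+1}$: it is planar, $2$-connected, has $\chi'=3=\Delta+1$, and deleting an edge leaves a path, so it is $2$-critical; its two faces have degree $2n+1$ and are incident to every vertex, so $\overline{F}(C_{2n+1})=F^*(C_{2n+1})=2n+1$, exceeding $t$ for $n$ large. The case $k=3$ (items $1$ and $3$) I would handle with a single family, a \emph{cyclic chain of diamonds} $G_m$: take $m$ disjoint copies of $K_4$ minus an edge (each with two $3$-vertices $a_i,b_i$ and two $2$-vertices $c_i,d_i$), add the edges $d_ic_{i+1}$ cyclically to make the graph cubic, and subdivide one link $d_mc_1$ by a new vertex $z$. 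Then $|V(G_m)|=4m+1$ is odd and $|E(G_m)|=6m+1=3\lfloor\tfrac12|V(G_m)|\rfloor+1$, so $G_m$ is overfull, hence class $2$; in the obvious embedding the outer boundaries of the diamonds fuse into two ``ring'' faces of degree $3m+1$ while each $a_i,b_i$ also lies on two triangles, so $F_{(G_m,\Sigma)}(a_i)=\tfrac{3+3+(3m+1)}{3}=m+\tfrac73$ is the minimum and $F^*(G_m)\ge m+\tfrac73$; and $\overline{F}(G_m)=\frac{2(6m+1)}{2m+2}=6-\frac{5}{m+1}$, strictly below $6$ and tending to $6$. Taking $m$ large settles both item $1$ for $k=3$ and item $3$.

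For the remaining items I would use near-regular planar families, $\overline{F}$ being pinned to the stated limits by Euler's identity as above. For item $4$: a cyclic chain of copies of a near-$4$-regular gadget — for instance the octahedron with one edge deleted, whose two $3$-vertices lie on the resulting $4$-face — linked by single edges with one link subdivided; this is overfull, hence class $2$, has $\overline{F}\to4$ from below, and (after the criticality check) is $4$-critical. For item $5$ the same template does not by itself give $F^*\ge3+\tfrac15$, because a near-$5$-regular planar graph is close to a triangulation and has ``buried'' vertices forced onto triangles; so here I would take instead a near-$5$-regular planar family (one $2$-vertex, all others of degree $5$, so overfull and $\overline{F}\to3+\tfrac13$ from below) built so that every vertex is incident to at least one $\ge4$-face — e.g.\ a suitably capped patch of the semiregular planar tiling with local face pattern $(3,3,3,3,4)$ — whence $F_{(G,\Sigma)}(v)\ge\tfrac{4\cdot3+4}{5}=\tfrac{16}{5}$ for every vertex, giving $F^*(G)\ge3+\tfrac15$. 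For item $1$ at $k=4$ the near-$4$-regular chain is useless for the same buried-vertex reason, so one needs a separate, sparser family of $4$-critical planar graphs — long faces joined by small class-$2$ ``islands'' placed so as to respect Lemma~\ref{trivial_statement} and Lemma~\ref{VAL} — admitting embeddings whose minimum vertex face-average is unbounded.

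The unavoidable routine work in every item is verifying \emph{criticality}: that deleting any single edge yields a class-$1$ graph. Using the cyclic and automorphism symmetry this reduces to a bounded list of edge-orbits, each disposed of by an explicit $\Delta$-edge-colouring (or a short Kempe-chain recolouring) of the corresponding edge-deleted graph; lengthy but mechanical. \textbf{The main obstacle} is not this, but producing the constructions behind the two $F^*$-claims, where density and embeddability pull in opposite directions: the graph must be dense enough (or contain a dense enough subgraph) to be class $2$, yet admit an embedding in which no vertex is hemmed in by small faces, while remaining critical under Vizing's Adjacency Lemma — and finding one family that does all three at once, especially for item $1$ with $k=4$, is where the genuine design effort lies.
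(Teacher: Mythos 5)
Your overall framework---certify class~2 by overfullness, read $\overline{F}$ off Euler's identity, and exhibit one good embedding for $F^*$---is exactly the paper's, and your diamond-chain and octahedron-chain families are sound substitutes for the paper's choices (the paper uses the prism over $C_n$ and the antiprism over $C_n$, each with one edge subdivided) for items 2, 3, 4 and for $k\in\{2,3\}$ of item 1. But item 1 for $k=4$ is a genuine gap: you stop at ``one needs a separate, sparser family \dots\ where the genuine design effort lies'' without producing any graph, so that part of the lemma is simply not proved. Worse, the premise that drove you there is false: a near-$4$-regular planar family need not have buried vertices. The paper's example is the $n$-antiprism (two concentric $n$-circuits joined by edges $x_iy_i$ and $x_iy_{i+1}$) with one edge subdivided; it is overfull, and in the natural embedding every vertex lies on three triangles and on one of the two faces of length about $n$, so $F^*\geq\frac{1}{4}(n+9)\to\infty$. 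The same graph simultaneously gives $\overline{F}=4-\frac{3}{n+1}$, so one family covers both item 1 and item 4, and the ``density versus embeddability'' tension you flag as the main obstacle does not actually arise.

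Two further points fall short of a proof. For item 5, ``a suitably capped patch of the $(3,3,3,3,4)$ tiling'' is the right intuition (the paper's $T_m$ realizes precisely the local pattern four triangles plus one $4$-face at every vertex, giving $F^*\geq 3+\frac15$), but the cap is where planarity and $5$-regularity collide and it must be written down explicitly; the paper uses inner and outer $4$-circuits glued to an annulus of nested $8$-circuits. And the criticality verifications are not ``a bounded list of edge-orbits'': subdividing one edge destroys the cyclic symmetry, so the number of edge-orbits grows with the size of the graph. The paper handles $k\in\{3,4\}$ by induction on $n$ (deleting one rung of the chain and suppressing the resulting $2$-vertices to reduce $G_n$ to $G_{n-2}$), and handles the $5$-regular family by an explicit decomposition of the parent graph into perfect matchings arranged so that some matching-union forms a circuit through the subdivided edge. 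Without arguments of this kind, criticality---which is half of each claim---remains unestablished in your write-up.
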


\begin{proof} The odd circuits are the only 2-critical graphs. Hence, the second statement and the first statement for $k=2$ are proved.
	Let $X$ and $Y$ be two circuits of length $n \geq 3$, with $V(X) = \{x_i\colon\ 0\leq i\leq n-1\}$, $V(Y) = \{y_i\colon\ 0\leq i\leq n-1\}$ and edges
	$x_ix_{i+1}$ and $y_iy_{i+1}$, where the indices are added modulo $n$. Consider an embedding, where $Y$ is inside $X$. Add edges
	$x_iy_i$ to obtain a planar cubic graph $G$ with $F^*(G) = \frac{1}{3}(n+8)$. Add edges $x_iy_{i+1}$ in $G$ to obtain a
	4-regular planar graph $H$ with $F^*(H) = \frac{1}{4}(n + 9)$.  Subdividing one edge in $G$ and one in $H$ yields a critical planar graph $G_n$ with $\Delta(G_n)=3$, and a critical planar graph $H_n$ with $\Delta(H_n) = 4$. If $n\geq 4t$, then $F^*(G_n) > t$ and $F^*(H_n) > t$. The proof that $G_n$ and $H_n$ are critical
	will be given in the last paragraph.  
	
	Since $|F(G_n)| = n+2$, and $\sum_{f \in F(G_n)}d_{G_n}(f) = 6n+2$, it follows that $\overline{F}(G_n) = 6 - \frac{10}{n+2}$. Analogously, we have $|F(H_n)| = 2n+2$ and $\sum_{f \in F(H_n)}d_{H_n}(f) = 8n+2$ and therefore, $\overline{F}(H_n) = 4 - \frac{3}{n+1}$.
	Now, the statements for $3$-critical and $4$-critical graphs follow.
	Examples of these graphs are given in Figure \ref{no_b_k_234}.
	
	\begin{figure} [ht]
		% Requires \usepackage{graphicx}
		\centering
		\includegraphics[width=4.5in]{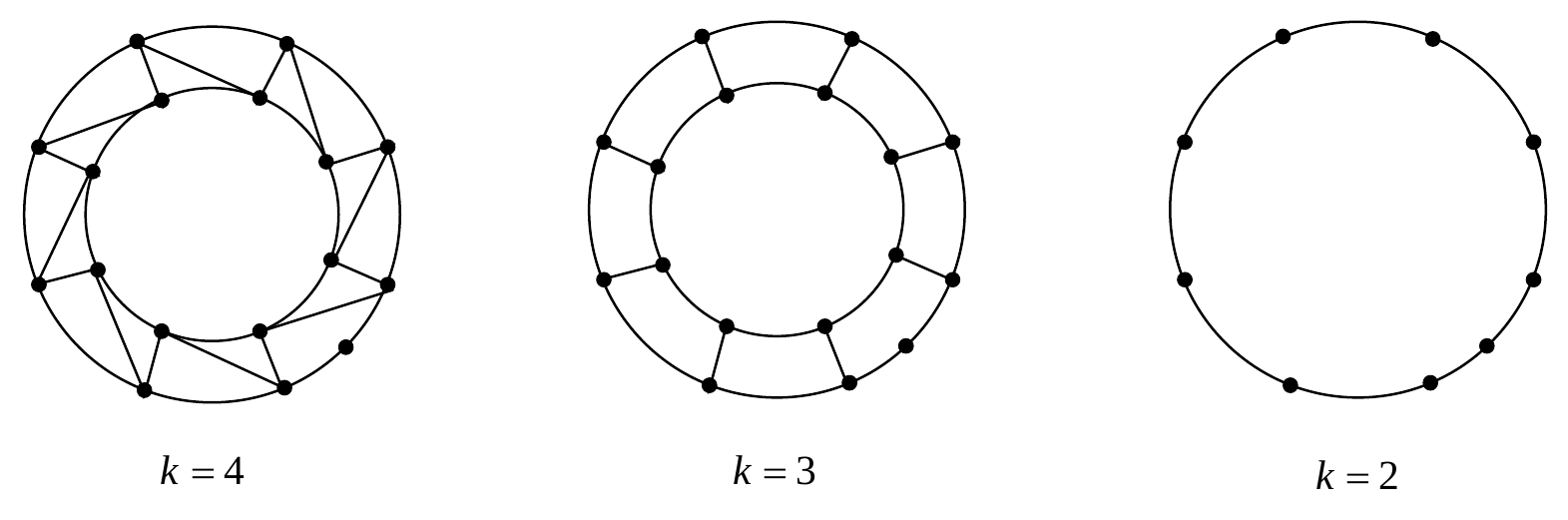}
		\caption{Examples for $k \in \{2,3,4\}$} \label{no_b_k_234}
	\end{figure}
	
	Let $m\geq 4$ be an integer. Let $C_i=[c_{i,1}c_{i,2}\cdots c_{i,4}]$ be a circuit of length 4 for $i\in \{1,m\}$, and $C_i=[c_{i,1}c_{i,2}\cdots c_{i,8}]$ be a circuit of length 8 for $i \in \{2, \dots, m-1\}$.
	Consider an embedding, where $C_i$ is inside $C_{i+1}$ for $ i \in \{  1, \dots ,m-1\}$.
	Add edges $c_{1,j}c_{2,2j-1}$, $c_{1,j}c_{2,2j}$, $c_{1,j}c_{2,2j+1}$ for $j\in \{1, \dots, 4\}$, edges $c_{i,j}c_{i+1,j}$
	for $i \in \{2, \dots, m-2\}$ and $j \in \{1, \dots, 8\}$, edges $c_{i,j}c_{i+1,j+1}$ for $ i \in \{2, \dots, m-2\}$ and $j\in \{2,4,6,8\}$, and
	edges $c_{m-1,2j-2}c_{m,j}$, $c_{m-1,2j-1}c_{m,j}$ and $c_{m-1,2j}c_{m,j}$ for $j \in \{1, \dots, 4\}$ to obtain a 5-regular planar graph $T$
	(the indices are added modulo 8). Subdividing the edge $c_{m,1}c_{m,2}$ in $T$ yields a critical planar graph $T_m$ with $\Delta(T_m)=5$
	(Figure \ref{lower_bound_b5} illustrates $T_6$).
	
	\begin{figure} [ht]
		% Requires \usepackage{graphicx}
		\centering
		\includegraphics[width=2.2in]{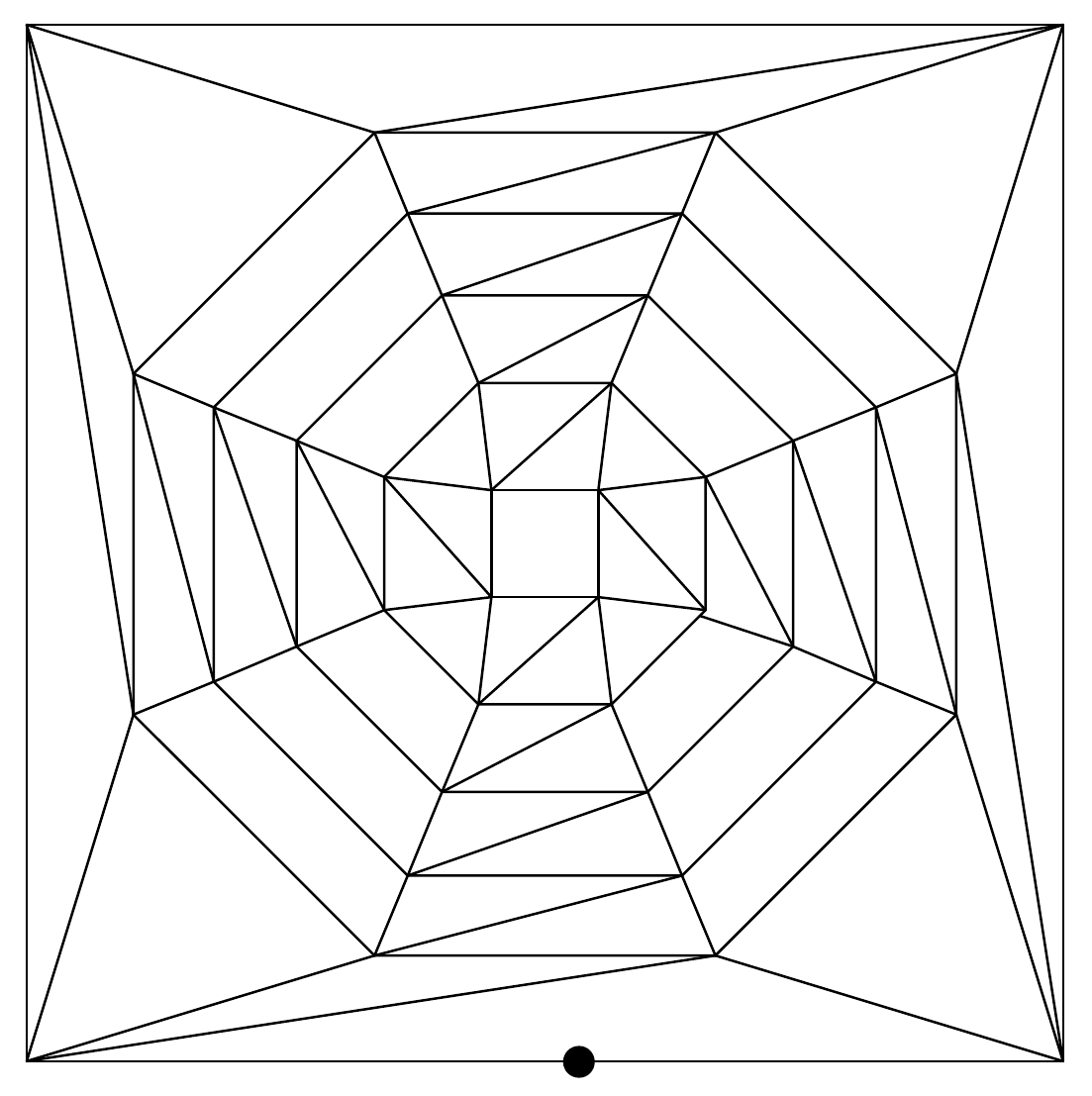}
		\caption{$(T_6,\Sigma_6)$} \label{lower_bound_b5}
	\end{figure}
	
	Since $|F(T_m)|=12m-10$ and $\sum_{f\in F(T_m)}d_{T_m}(f)=40m-38$, it follows that $\overline{F}(T_m)=\frac{10}{3}-\frac{7}{18m-15}$. 
	Furthermore, for the embedding $\Sigma_m$ of $T_m$ as indicated in Figure \ref{lower_bound_b5} (for $m=6$) 
	we calculate that $F((T_m,\Sigma_m)) = 3 + \frac{1}{5}$
	and therefore, $F^*(T_m) \geq 3 + \frac{1}{5}$.
	
	It remains to prove that $G_n$, $H_n$ and $T_m$ are critical. 
	For $G_n$ and $H_n$ we proceed by induction on $n$.
	It is easy to verify the truth for $3\leq n\leq 6$. We proceed to induction step.
	We argue first on $G_n$. Let $u$ be the vertex of degree 2. Since $n\geq 7$, for any edge $e$ of $G_n$, there exists some $k$ such that no vertex of the circuit $C$  is incident with $e$ or adjacent to $u$, where $C=[x_{k+1}y_{k+1}y_{k+2}x_{k+2}]$.
	Reduce $G_n$ to $G_{n-2}$ by removing the edges $x_{k+1}y_{k+1}$ and $x_{k+2}y_{k+2}$ and suppressing their ends. Let $G'$ be the resulting graph and $e'$ be the resulting edge from $e$. By the induction hypothesis, $G'$ is critical. Hence, $G'-e'$ has a 3-edge-coloring, say $\phi$.  
	Assign $\phi (x_kx_{k+3})$ to $x_kx_{k+1}$ and $x_{k+2}x_{k+3}$, and $\phi (y_ky_{k+3})$ to $y_ky_{k+1}$ and $y_{k+2}y_{k+3}$, and consequently, the edges of $C$ can be properly colored. Now a 3-edge-coloring of $G_n-e$ is completed and so, $G_n-e$ is class 1. Moreover, since $G_n$ is overfull, this graph is class 2. Therefore, $G_n$ is critical.
	The argument on $H_n$ is analogous.
	
	For any $T_m$, recall that $T$ is the graph obtained from $T_m$ by suppressing the bivalent vertex.
	Consider $T$. Since each circuit $C_i$ has even length, their edges can be decomposed into two perfect matchings $M_1$ and $M_2$, so that $M_1$ contains $c_{i,1}c_{i,2}$ for $i\in \{1,m\}$ and  $c_{i,2}c_{i,3}$ for $2\leq i\leq m-1$.
	Let $M_3=\{c_{1,j}c_{2,2j+1}\colon\ 1\leq j\leq 4\}\cup \{c_{i,2j}c_{i+1,2j+1}\colon\ 2\leq i\leq m-2,1\leq j\leq 4\}\cup \{c_{m-1,2j-2}c_{m,j}\colon\ 1\leq j\leq 4\}$.
	Clearly, $M_3$ is a perfect matching disjoint with $M_1$ and $M_2$. We can see that $E(G)\setminus (M_1\cup M_2\cup M_3)$ induces even circuits and hence, their edges can be decomposed into two perfect matchings $M_4$ and $M_5$, so that $M_4$ contains $c_{1,j}c_{2,2j}$ for $1\leq j\leq 4$. 
	Clearly, $M_1,\ldots,M_5$ constitute a decomposition of $E(T)$.
	
	Let $e_i=c_{m,i}c_{m,i+1}$ for $1\leq i\leq 4$. Let $M_2'=M_2\cup \{e_1,e_3\}\setminus \{e_2,e_4\}$.
	Define $A_1 = M_1\cup M_3, A_2 = M_2'\cup M_4, A_3 = M_2'\cup M_5.$
	
	Let $h_m$ be an edge of $T_m$.
	Since $T_m$ is overfull, to prove that $T_m$ is critical, it suffices to show that $T_m-h_m$ is a 5-edge-colorable. 
	
	Let $h$ be the edge of $T$ that corresponds to $h_m$.
	We can see that $A_1\cup A_2\cup A_3= E(T)\setminus \{e_2,e_4\}$ and $e_1\in A_1\cap A_2\cap A_3$.
	Hence, if $h\notin \{e_2,e_4\}$ then there exists $A\in \{A_1,A_2,A_3\}$ such that $e_1,h\in A$.
	Note that $e_1$ is the edge subdivided to get $T_m$ from $T$, and that $A$ induces a circuit of $T$. It follows that this circuit corresponds to a path $P$ of $T_m-h_m$.
	Moreover, note that the edges of $T-A$ can be decomposed into 3 perfect matchings, and thus the same to the edges of  $T_m-h_m-E(P)$. Therefore, $T_m-h_m$ is 5-edge-colorable.
	
	If $h\in \{e_2,e_4\}$ then $C_m$ corresponds to a path of $T_m-h_m$.
	Note that $E(C_m)\subseteq M_1\cup M_2$ and that $M_1,\ldots,M_5$ constitute a decomposition of $E(T)$. Similarly, we can argue that $T_m-h_m$ is 5-edge-colorable in this case.
\end{proof}

The following lemma is implied by Euler's formula directly.

\begin{lemma} \label{size_planar}
	If $G$ is a planar graph, then $|E(G)| = \frac{\overline{F}(G)}{\overline{F}(G)-2}(|V(G)|-2)$.
\end{lemma}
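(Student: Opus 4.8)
The plan is to start from the identity already extracted from Euler's formula in the introduction, namely $\overline{F}(G) = \frac{2|E(G)|}{|E(G)|-|V(G)|+2}$, and simply invert it to isolate $|E(G)|$. Writing $E$ and $V$ for $|E(G)|$ and $|V(G)|$ and $\mu$ for $\overline{F}(G)$, I would clear the denominator to get $\mu(E - V + 2) = 2E$, move the terms containing $E$ to one side to obtain $(\mu - 2)E = \mu(V - 2)$, and divide by $\mu - 2$ to arrive at $E = \frac{\mu}{\mu - 2}(V-2)$, which is exactly the asserted formula.

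The only point that needs a word of justification is that this division is legitimate, i.e. that $\overline{F}(G) \neq 2$. Since every face of a plane graph has degree at least $3$, the average face-degree satisfies $\overline{F}(G) \geq 3$, hence $\overline{F}(G) - 2 \geq 1 > 0$; in particular $\mu - 2$ is nonzero and the rearrangement above is valid. (Alternatively, one could observe that $\mu = 2$ would force the denominator $|E(G)| - |V(G)| + 2 = |F(G)|$ to be unbounded, which is impossible for a finite graph.)

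For completeness I would also recall why $\overline{F}(G) = \frac{2|E(G)|}{|E(G)|-|V(G)|+2}$ holds, so that the lemma is self-contained: each edge of a $2$-connected plane graph lies on exactly two faces, so $\sum_{f\in F(G)} d_G(f) = 2|E(G)|$, whence $\overline{F}(G) = \frac{2|E(G)|}{|F(G)|}$ by definition of $\overline{F}$, and Euler's formula $|V(G)| - |E(G)| + |F(G)| = 2$ lets one substitute $|F(G)| = |E(G)| - |V(G)| + 2$. There is essentially no obstacle here; the statement is a one-line algebraic manipulation of Euler's formula, and the proof is almost entirely routine calculation.
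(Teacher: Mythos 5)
Your proof is correct and matches the paper's intent exactly: the paper gives no explicit proof, stating only that the lemma "is implied by Euler's formula directly," and your argument is precisely that direct implication — rearranging $\overline{F}(G) = \frac{2|E(G)|}{|E(G)|-|V(G)|+2}$ to isolate $|E(G)|$. Your extra remark that $\overline{F}(G)\geq 3$ justifies the division by $\overline{F}(G)-2$ is a sensible bit of added care.
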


\section{Proofs} \label{proofs}

\subsection{Theorem \ref{Main_global}}

The statement for $k=2$ and the lower bounds for $\overline{b}_k$ if $k \in \{3,4,5\}$ follow from Lemma \ref{graphs}.
The other statements of Theorem \ref{Main_global} are implied by the following proposition.

\begin{proposition} Let $G$ be a $k$-critical planar graph. \label{pro_globe}
	\begin{enumerate}
		\item If $k = 3$, then $\overline{F}(G) < 8 $.
		\item If $k = 4$, then $\overline{F}(G) < 4 + \frac{4}{5}$.
		\item If $k = 5$, then $\overline{F}(G) < 3 + \frac{3}{4}$.
		\item If $k = 6$, then $\overline{F}(G) < 3 + \frac{1}{3}$.
	\end{enumerate}
\end{proposition}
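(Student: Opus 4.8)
The plan is to obtain all four bounds as immediate consequences of Euler's formula together with the known lower bounds on the number of edges of a $k$-critical graph. Write $n=|V(G)|$ and $m=|E(G)|$. By Lemma~\ref{size_planar} (equivalently, the identity $\overline{F}(G)=\frac{2|E(G)|}{|E(G)|-|V(G)|+2}$ recorded after Euler's formula in the introduction) we have $\overline{F}(G)=\frac{2m}{m-n+2}$; this is legitimate since $G$, being critical, is $2$-connected and hence $m-n+2=|F(G)|\geq 1>0$. A $k$-critical graph with $k\geq 3$ has a vertex of degree $k$, so $n\geq k+1\geq 4$; in particular $n\geq 3$, and a one-line computation $\frac{\partial}{\partial m}\frac{2m}{m-n+2}=\frac{2(2-n)}{(m-n+2)^2}<0$ shows that, for fixed $n$, $\overline{F}(G)$ is a strictly decreasing function of $m$. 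Consequently any lower bound $m\geq g(n)$ forces $\overline{F}(G)\leq \frac{2g(n)}{g(n)-n+2}$.

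I would then dispatch the four cases in turn. For $k=3$, Theorem~\ref{Jakobsen} gives $m\geq\frac{4}{3}n$, hence $\overline{F}(G)\leq\frac{8n}{n+6}<8$. For $k=4$, Theorem~\ref{Woodall} gives $m\geq\frac{12}{7}n$, hence $\overline{F}(G)\leq\frac{24n}{5n+14}<\frac{24}{5}=4+\frac{4}{5}$. For $k=5$, Theorem~\ref{Woodall} gives $m\geq\frac{15}{7}n$, hence $\overline{F}(G)\leq\frac{30n}{8n+14}<\frac{30}{8}=3+\frac{3}{4}$. For $k=6$, Theorem~\ref{Luo_etal} gives $m\geq\frac12(5n+3)$, hence $\overline{F}(G)\leq\frac{10n+6}{3n+7}<\frac{10}{3}=3+\frac{1}{3}$. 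In every case the concluding strict inequality, after cross-multiplying by the (positive) denominators, collapses to a trivially true inequality between integers (for instance $0<48$ in the case $k=3$, $0<336$ for $k=4$, $0<420$ for $k=5$, and $18<70$ for $k=6$), so no genuine estimation is needed.

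There is essentially no obstacle here beyond bookkeeping, since the entire weight of the argument rests on the cited edge-count theorems; the only points meriting a sentence of care are the monotonicity observation (which is what licenses substituting the lower bound for $m$) and the remark that $m-n+2>0$ so that Lemma~\ref{size_planar} may be solved for $\overline{F}(G)$ and the resulting fractions manipulated freely. I would also note in passing that these computations make transparent the relationship to the lower bounds of Theorem~\ref{Main_global}: any improvement of the constants in the edge-count theorems sharpens the upper bound on $\overline{b}_k$, and if Seymour's exact (overfull) conjecture holds for the relevant $k$ then the same arithmetic, applied with equality, pins $\overline{b}_k$ down exactly.
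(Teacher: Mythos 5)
Your proof is correct and follows essentially the same route as the paper: both arguments combine Lemma~\ref{size_planar} (Euler's formula) with the edge-count lower bounds of Theorems~\ref{Jakobsen}, \ref{Woodall} and \ref{Luo_etal}. The paper phrases it as a contradiction ($\overline{F}(G)\geq$ the bound would force $|E(G)|\leq \frac{c}{c-2}(|V(G)|-2)$, contradicting the edge-count theorem), while you argue directly via monotonicity of $\frac{2m}{m-n+2}$ in $m$; the two are the same computation arranged differently.
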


\begin{proof}
	Let $k = 3$ and suppose to the contrary that $\overline{F}(G) \geq 8$. With Lemma \ref{size_planar} and Theorem \ref{Jakobsen} we deduce
	$ \frac{4}{3}|V(G)| \leq |E(G)| \leq \frac{4}{3}(|V(G)|-2)$, a contradiction.
	
	The other statements follow analogously from Lemma \ref{size_planar} and Theorem \ref{Woodall} ($k \in \{4,5\}$) and Theorem \ref{Luo_etal} ($k=6$).
\end{proof}

\subsection{Theorem \ref{MAIN}}

The statement for $k \in \{2,3,4\}$ and the lower bound for $b^*_5$ follow from Lemma \ref{graphs}. It remains to prove the
upper bounds for $b^*_5$ and $b^*_6$. The result for $b^*_5$ is implied by the following theorem.

\begin{theorem} \label{thm_delta5_F*}
	If $G$ is a planar $5$-critical graph, then $F^*(G) \leq 7 + \frac{1}{2}$.
\end{theorem}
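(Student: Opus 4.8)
**Proof proposal for Theorem \ref{thm_delta5_F*}.**

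The plan is to argue by contradiction via a discharging argument on a fixed well-chosen embedding. Suppose $G$ is a planar $5$-critical graph and fix an embedding $(G,\Sigma)$ with $F((G,\Sigma)) = F^*(G) > 7 + \frac12$. By definition of $F((G,\Sigma))$, every vertex $v$ then satisfies $F_{(G,\Sigma)}(v) > 7+\frac12$, i.e. the sum of the degrees of the faces incident with $v$ exceeds $(7+\frac12)d_G(v)$; equivalently $\sum_{i}(d(f_i) - 8) > -\frac12 d_G(v)$, so on average the faces around each vertex are large (at least close to octagons). The immediate structural consequence I would extract is that small faces (triangles, quadrilaterals, pentagons) are scarce and heavily ``surrounded'' by large faces and by vertices of small degree cannot exist in isolation — more precisely, a $2$- or $3$-vertex forces all its incident faces to be very large. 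Combined with Lemma \ref{trivial_statement}, Lemma \ref{VAL}, Lemma \ref{Zhang_lemma} and Lemma \ref{SZ}, which constrain the neighbourhoods of low-degree vertices in a $5$-critical graph, this should pin down the local picture enough to run a charge count.

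The core step is a discharging scheme on $(G,\Sigma)$. Assign to each vertex $v$ the charge $\mu(v) = d_G(v) - 6$ and to each face $f$ the charge $\mu(f) = 2d_{(G,\Sigma)}(f) - 6$; by Euler's formula $\sum_{v}\mu(v) + \sum_{f}\mu(f) = 4(|E|-|V|-|F|) + \text{const} = -12 < 0$ (I would calibrate the exact weights so the total is a fixed negative constant). The hypothesis $F^*(G) > 7+\frac12$ gives a reservoir of positive charge concentrated on the many large faces, while the only vertices with negative charge are the $2^-$-, $3$-, $4$-, $5$-vertices. I would then have each face send charge to its incident negative-charge vertices — say each $d$-face gives a fixed amount to each incident $5^-$-vertex — and show, using the adjacency lemmas to bound how many low-degree neighbours a vertex can have and the face-degree hypothesis to guarantee the donating faces are genuinely large, that after discharging every vertex and every face has nonnegative final charge, contradicting the negative total. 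The numeric threshold $7+\frac12$ is exactly what is needed for the worst local configuration (presumably a $5$-vertex, or a subdivision vertex of degree $2$, surrounded by faces whose degrees sum to just over $7.5$ times its degree) to still yield a nonnegative balance; a smaller bound would not suffice, and a larger one would be provable, which is why $7+\frac12$ appears.

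The main obstacle I anticipate is the interplay between the two ``resources.'' Unlike a standard planar discharging proof, here we are not told faces are small — quite the opposite; the difficulty is that positive charge sits on faces that may be far (in the graph) from the low-degree vertices that need it, so one must show that each negative vertex has enough large faces \emph{incident to it} (not merely nearby) to cover its deficit. This is precisely where the definition of $F((G,\Sigma))$ as a \emph{minimum over vertices} of an average over \emph{incident} faces is doing the work: it localizes the face-degree bound to each vertex's own faces, so a $2$-vertex must have its two faces summing to more than $15$, a $3$-vertex its three faces summing to more than $22\frac12$, etc. Turning these per-vertex inequalities into a valid global discharging rule — and checking the handful of extremal vertex types ($2,3,4,5$) together with the constraints from Lemmas \ref{VAL}--\ref{SZ} on their neighbours — will be the technical heart of the argument, and I expect the $4$- and $5$-vertex cases, where the face-degree surplus per vertex is smallest, to be the tight ones that dictate the constant $7+\frac12$.
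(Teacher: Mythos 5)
Your overall framework matches the paper's: fix an embedding attaining $F^*(G)$, assign charges via Euler's formula, use the per-vertex inequality $\sum_i d(f_i) > (7+\tfrac12)d_G(v)$ to quantify what each vertex can draw from its own incident faces, and invoke Lemmas \ref{trivial_statement}, \ref{VAL}, \ref{Zhang_lemma}, \ref{SZ} to control neighbourhoods. But the concrete discharging scheme you propose --- only faces donating to their incident $5^-$-vertices --- cannot be made to work, and the gap is visible already at $2$-vertices. Under your normalization $\mu(v)=d_G(v)-6$, a $2$-vertex carries charge $-4$, while a face of degree $d$ can give each incident vertex at most $\frac{2d-6}{d}<2$; so the two incident faces supply strictly less than $4$ no matter how large they are, and the deficit is never covered. (The same failure occurs under the paper's normalization $d-4$: the two faces give strictly less than $2$ against a deficit of $2$.) Since planar $5$-critical graphs with $2$-vertices exist (e.g.\ the graphs $T_m$ of Lemma \ref{graphs}) and cannot be excluded, no face-to-vertex-only rule closes the argument.

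The missing idea is a second, vertex-to-vertex layer of discharging: in the paper's proof the $5$-vertices subsidize their low-degree neighbours (rules R2.1--R2.3), and $5$-vertices with only $4^+$-neighbours further subsidize those $5$-neighbours that are adjacent to a $2$-vertex (rule R2.4). The adjacency lemmas are used not merely to ``bound how many low-degree neighbours a vertex can have'' in the abstract, but to prove the two quantitative facts that make this transfer balance: every $2$-, $3$- or $4$-vertex has enough $5$-neighbours to cover its residual deficit, and every $5$-vertex has few enough needy neighbours (indeed, a $5$-vertex adjacent to a bad low-degree vertex has its remaining neighbourhood forced to consist of $5$-vertices by Lemma \ref{Zhang_lemma}, and Lemma \ref{SZ} is needed to limit incident triangles in one subcase) that it does not go negative after donating. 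Without this second layer your charge count fails at $2$-vertices, and the $4$- and $5$-vertex cases you flag as ``tight'' are in fact not the only critical ones.
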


\begin{proof}
	Suppose to the contrary that $F^*(G) =r > 7 + \frac{1}{2}$.
	Let $\Sigma$ be an embedding of $G$ into the Euclidean plane such that $F^*(G) = F((G,\Sigma))$.  Let $V=V(G)$, $E=E(G)$, and $F$ be the set of faces of $(G,\Sigma)$.
	We proceed by a discharging argument in $G$ and eventually deduce a contradiction.
	Define the initial charge $ch$ in $G$ as $ch(x)=d_G(x)-4$ for $x\in V\cup F$. Euler's formula $|V|-|E|+|F|=2$ can be rewritten as:
	$$\sum\limits_{x\in V\cup F}ch(x)=\sum\limits_{x\in V\cup F}(d_G(x)-4)=-8.$$
	
	We define suitable discharging rules to change the initial charge function $ch$ to the final charge function $ch^*$ on
	$V\cup F$ such that $\sum\limits_{x\in V\cup F}ch^*(x) \geq 0$ for all $x\in V\cup F$. Thus,
	\begin{center}
		$-8=\sum\limits_{x\in V\cup F}ch(x)=\sum\limits_{x\in V\cup F}ch^*(x)\geq0$,
	\end{center}
	which is the desired contradiction.
	
	Note that if a face $f$ sends charge $-\frac{1}{3}$ to a vertex $y$, then this can also be considered as $f$ receives charge $\frac{1}{3}$ from $y$. The discharging rules are defined as follows.
	
	\noindent
	\textbf{R1:} Every $3^+$-face $f$ sends $\frac{d_G(f)-4}{d_G(f)}$ to each incident vertex.
	
	\noindent
	\textbf{R2:} Let $y$ be a 5-vertex of $G$.
	
	{\bf R2.1:} If $z$ is a 2-neighbor of $y$, then $y$ sends $\frac{2}{3}+\frac{2}{\lceil 2r \rceil-3}$ to $z$.
	
	{\bf R2.2:} If $z$ is a 3-neighbor of $y$, then $y$ sends charge to $z$ as follows:
	
	~\quad{\bf R2.2.1:} if $z$ has a $4$-neighbor, then $y$ sends $\frac{1}{3}+\frac{2}{\lceil 3r \rceil-6}$ to $z$;
	
	~\quad{\bf R2.2.2:} if $z$ has no $4$-neighbor, then $y$ sends $\frac{2}{9}+\frac{4}{3(\lceil 3r \rceil-6)}$ to $z$.
	
	{\bf R2.3:} If $z$ is a 4-neighbor of $y$ and $z$ is adjacent to $n$ 5-vertices $(2\leq n\leq 4)$, then $y$ sends $\frac{4}{n(\lceil 4r \rceil-9)}$ to $z$.
	
	{\bf R2.4:} If $y$ is adjacent to five $4^+$-vertices, then $y$ sends $\frac{1}{3}(\frac{4}{\lceil 5r \rceil-12}+\frac{2}{\lceil 2r \rceil-3})$ to each 5-neighbor which is adjacent to a 2-vertex.
	
	\begin{claim} \label{C1}
		If $u$ is a $k$-vertex, then $u$ receives at least $\frac{4-k}{3}-\frac{4}{\lceil rk \rceil-3k+3}$ in total from its incident faces by R1.
		In particular, if $u$ is incident with at most two triangles, then $u$ receives at least $\frac{1}{3}-\frac{4}{\lceil rk \rceil -4k+6}$ in total from its incident faces.
	\end{claim}
	
	\begin{proof} Note that if $a$ and $b$ are integers and $2 \leq a \leq b$, then $\frac{1}{a-1}+\frac{1}{b+1}\geq\frac{1}{a}+\frac{1}{b}$. \hfill ($\otimes$)
		
		Let $u$ be a $k$-vertex which is incident with faces $f_1,f_2,\cdots,f_{k}$.
		According to rule R1, $u$ totally receives charge $S=\sum_{i=1}^k\frac{d_G(f_i)-4}{d_G(f_i)}=k-4\sum_{i=1}^k\frac{1}{d_G(f_i)}$ from its incident faces.
		The supposition $r\geq \frac{15}{2}$ implies that not all of $f_1,\ldots,f_k$ are triangles. It follows by ($\otimes$) that
		$\sum_{i=1}^k\frac{1}{d_G(f_i)}$ reaches its maximum when all of $f_1,\ldots,f_k$ are triangles except one.
		Since $\sum_{i=1}^k d_G(f_i)\geq \lceil rk\rceil$, we have $S\geq k-4(\frac{1}{3}(k-1)+\frac{1}{\lceil rk\rceil-3(k-1)})= \frac{4-k}{3}-\frac{4}{\lceil rk \rceil-3k+3}$. In particular, if $u$ is incident with at most two triangles, then  we have $S\geq k-4(\frac{2}{3}+\frac{1}{4}(k-3)+\frac{1}{\lceil rk\rceil-6-4(k-3)})=\frac{1}{3}-\frac{4}{\lceil rk \rceil -4k+6}.$
	\end{proof}

	\begin{claim} \label{C2}
		The charge that a 5-vertex sends to a 4-neighbor by R2.3 is smaller than or equal to the charge that a 5-vertex sends to a 5-neighbor which is adjacent to a 2-vertex by R2.4, that is, $\frac{4}{n(\lceil 4r \rceil-9)}\leq\frac{1}{3}(\frac{4}{\lceil 5r \rceil-12}+\frac{2}{\lceil 2r \rceil-3})$.
	\end{claim}
	
	\begin{proof} Since $\frac{4}{n(\lceil 4r \rceil-9)}\leq\frac{2}{\lceil 4r \rceil-9}\leq \frac{2}{4r-9}$ and $\frac{1}{3}(\frac{4}{ 5r+1-12}+\frac{2}{ 2r+1-3})\leq\frac{1}{3}(\frac{4}{\lceil 5r \rceil-12}+\frac{2}{\lceil 2r \rceil-3})$, we only need to prove that $\frac{2}{4r-9}\leq \frac{1}{3}(\frac{4}{5r+1-12}+\frac{2}{2r+1-3})$, which is equivalent to $2r^2-15r+23\geq 0$ by simplification.
		Clearly, this inequality is true for every $r\geq 5 + \frac{2}{5}$.
	\end{proof}

	It remains to check the final charge for all $x\in V\cup F$.
	
	Let $f\in F$, then $ch^{*}(f)\geq d_G(f)-4-d_G(f) \frac{d_G(f)-4}{d_G(f)}=0$ by R1.
	
	Let $v\in V$.
	If $d_G(v)=2$, then $v$ receives at least $\frac{2}{3}-\frac{4}{\lceil 2r \rceil-3}$ in total from its incident faces by Claim \ref{C1}.
	By Lemma \ref{trivial_statement}, $v$ has two 5-neighbors.
	Thus, $v$ receives $\frac{2}{3}+\frac{2}{\lceil 2r \rceil-3}$ from each of them by R2.1. So we have
	$ch^{*}(v) \geq d_G(v)-4+(\frac{2}{3}-\frac{4}{\lceil 2r\rceil-3})+2 (\frac{2}{3}+\frac{2}{\lceil 2r \rceil-3})=0$.
	
	If $d_G(v)=3$, then $v$ receives at least $\frac{1}{3}-\frac{4}{\lceil 3r \rceil-6}$ in total from its incident faces by Claim \ref{C1}. By Lemmas \ref{trivial_statement} and \ref{VAL}, $v$ has three $4^+$-neighbors, and two of them have degree 5.
	If $v$ has a $4$-neighbor, then by R2.2.1, $ch^{*}(v)\geq d_G(v)-4+(\frac{1}{3}-\frac{4}{\lceil 3r\rceil-6})+2 (\frac{1}{3}+\frac{2}{\lceil 3r \rceil-6})=0$.
	Otherwise, by R2.2.2, $ch^{*}(v)\geq d_G(v)-4+(\frac{1}{3}-\frac{4}{\lceil 3r\rceil-6})+3 (\frac{2}{9}+\frac{4}{3(\lceil 3r \rceil-6)})=0$.
	
	If $d_G(v)=4$, then $v$ receives at least $-\frac{4}{\lceil 4r \rceil-9}$ in total from its incident faces by Claim \ref{C1}. Say $v$ has precisely $n$ 5-neighbors.  By Lemma \ref{trivial_statement}, we have $2\leq n\leq 4$. By R2.3, each of these 5-neighbors send $\frac{4}{n(\lceil 4r \rceil-9)}$ to $v$.
	Therefore, $ch^{*}(v)\geq d_G(v)-4-\frac{4}{\lceil 4r \rceil-9}+n \frac{4}{n(\lceil 4r \rceil-9)}=0$.
	
	If $d_G(v)=5$, then $v$ receives at least $-\frac{1}{3}-\frac{4}{\lceil 5r \rceil-12}$ in total from its incident faces by Claim \ref{C1}.
	First assume $v$ has a 2-neighbor, then by Lemma \ref{Zhang_lemma}, $v$ has four 5-neighbors and at least three of them are adjacent to no $3^-$-vertex.
	Hence, by R2.1 and R2.4, $ch^{*}(v)\geq d_G(v)-4-(\frac{1}{3}+\frac{4}{\lceil 5r\rceil-12})-(\frac{2}{3}+\frac{2}{\lceil 2r\rceil-3})+3 (\frac{1}{3}(\frac{4}{\lceil 5r\rceil-12}+\frac{2}{\lceil 2r \rceil-3}))=0$.
	
	Next assume that $v$ has a 3-neighbor $u$, then by Lemma \ref{VAL}, $v$ has at least three 5-neighbors. In this case, $v$ sends nothing to each 5-neighbor.
	Let $w$ be the remaining neighbor of $v$. Then $d_G(w)\in \{3,4,5\}$.
	
	If $d_G(w)=3$, then $uw\notin E(G)$ by Lemma \ref{trivial_statement}. Furthermore,
	Lemma \ref{SZ} implies that neither $vw$ nor $uv$ is contained in a triangle. It follows that $v$ is incident with at most two triangles. Thus, by Claim \ref{C1}, $v$ receives a charge of
	at least $\frac{1}{3}-\frac{4}{\lceil 5r \rceil -14}$ in total from its incident faces. Moreover, both $u$ and $w$ have no $4^-$-neighbors. Suppose to the contrary that $t$ is a $4^-$-neighbor of $u$ (analogously of $w$). By Lemma \ref{trivial_statement}, we have $d_G(t)=4$. By applying Lemma \ref{Zhang_lemma} to $ut$,
	we have $d_G(w)\geq 4$, a contradiction.
	Hence, $v$ sends $\frac{2}{9}+\frac{4}{3(\lceil 3r \rceil-6)}$ to each of $u$ and $w$ by rule R2.2.2, yielding
	$ch^{*}(v)\geq d_G(v)-4+(\frac{1}{3}-\frac{4}{\lceil 5r \rceil -14})-2(\frac{2}{9}+\frac{4}{3(\lceil 3r \rceil-6)})=\frac{8}{9}-\frac{4}{\lceil 5r\rceil-14}-\frac{8}{3(\lceil 3r \rceil -6)}$.
	
	If $d_G(w)=4$, and if $u$ is adjacent to $w$, then by Lemma \ref{Zhang_lemma}, $w$ has three 5-neighbors. Hence,
	by R2.2 and R2.3, $ch^{*}(v)\geq d_G(v)-4-(\frac{1}{3}+\frac{4}{\lceil 5r\rceil-12})-(\frac{1}{3}+\frac{2}{\lceil 3r\rceil-6})-\frac{4}{3(\lceil 4r \rceil-9)} =
	\frac{1}{3}-\frac{2}{\lceil 3r\rceil-6}-\frac{4}{3(\lceil 4r \rceil-9)}-\frac{4}{\lceil 5r\rceil-12}$.
	If $u$ is not adjacent to $w$, then for any neighbor $t$ of $u$, we have $d_G(t)\geq 4$ by Lemma \ref{trivial_statement}.
	If $d_G(t)=4$, then by applying Lemma \ref{Zhang_lemma} to $ut$ we have $d_G(w)=5$, a contradiction.
	Hence, $d_G(t)=5$. This means all neighbors of $u$ are of degree 5.
	By R2.2.2,
	$ch^{*}(v)\geq d_G(v)-4-(\frac{1}{3}+\frac{4}{\lceil 5r\rceil-12})-(\frac{2}{9}+\frac{4}{3(\lceil 3r\rceil-6)})-\frac{2}{\lceil 4r \rceil-9} =
	\frac{4}{9}-\frac{4}{3(\lceil 3r\rceil-6)}-\frac{2}{\lceil 4r \rceil-9}-\frac{4}{\lceil 5r\rceil-12}$.
	
	If $d_G(w)=5$, then $v$ sends charge only to $u$. Hence,
	$ch^{*}(v)\geq d_G(v)-4-(\frac{1}{3}+\frac{4}{\lceil 5r\rceil-12})-(\frac{1}{3}+\frac{2}{\lceil 3r\rceil-6}) = \frac{1}{3}-\frac{2}{\lceil 3r\rceil-6}-\frac{4}{\lceil 5r\rceil-12}$.
	
	It remains to consider the case when $v$ has five $4^+$-neighbors.
	In this case it follows with Claim \ref{C2} that $ch^{*}(v)\geq d_G(v)-4-(\frac{1}{3}+\frac{4}{\lceil 5r \rceil -12})-5 (\frac{1}{3}(\frac{4}{\lceil 5r\rceil-12}+\frac{2}{\lceil 2r\rceil-3})) =
	\frac{2}{3}-\frac{32}{3(\lceil 5r\rceil-12)}-\frac{10}{3(\lceil 2r\rceil-3)}$.
	
	Since $r>7+\frac{1}{2}$ it follows that $ch^{*}(x)\geq 0$ for all $x\in V\cup F$.
\end{proof}

The result for $k=6$ in Theorem \ref{MAIN} is implied by the following theorem.

\begin{theorem} \label{thm_delta6_F*}
	If $G$ is a planar $6$-critical graph, then $F^*(G) \leq 3 + \frac{2}{5}$.
\end{theorem}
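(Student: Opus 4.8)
The plan is to run a discharging argument on $(G,\Sigma)$ in the same spirit as the proof of Theorem~\ref{thm_delta5_F*}, with every correction term recalibrated to the bound $3+\frac25$; as in that proof the entire force comes from the local constraint imposed by a large $F^*$, not from any edge count. Suppose to the contrary that $F^*(G)=r>3+\frac25$, and fix an embedding $\Sigma$ with $F((G,\Sigma))=r$; let $V,E,F$ be its vertices, edges and faces. Assign the initial charge $ch(x)=d_G(x)-4$ to each $x\in V\cup F$, so that Euler's formula gives $\sum_{x\in V\cup F}ch(x)=-8$. We will redistribute to a function $ch^*$ with $ch^*(x)\ge 0$ everywhere, contradicting $-8=\sum ch^*(x)\ge 0$. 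The numerical input about $r$ is carried entirely by the integer inequalities $\lceil 2r\rceil\ge 7$, $\lceil 3r\rceil\ge 11$, $\lceil 4r\rceil\ge 14$, $\lceil 5r\rceil\ge 18$ and $\lceil 6r\rceil\ge 21$.

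First I would keep rule R1: every $3^+$-face sends $\frac{d_G(f)-4}{d_G(f)}$ to each incident vertex, so $ch^*(f)=0$; and re-prove the analogue of Claim~\ref{C1}, that a $k$-vertex receives at least $\frac{4-k}{3}-\frac{4}{\lceil rk\rceil-3k+3}$ from its incident faces (the same convexity estimate, using $\sum_{f\ni u}d_G(f)\ge\lceil rk\rceil$), with the sharper bounds valid when it is incident to at most two, or at most one, triangle. The remaining rules push charge from $6$-vertices, and from $5$-vertices that can afford it, to incident $2$-, $3$- and $4$-vertices, and — since the $\Delta=6$ margins are tighter — also provide a backup flow from $6$-vertices having no $4^-$-neighbour to $5$- and $6$-vertices that carry a heavy load. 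Each amount is a rational constant plus a $\frac{1}{\lceil kr\rceil-c}$ correction, tuned so that the recipient reaches exactly $0$ in its worst admissible face-pattern; for example a $6$-vertex should send $\frac23+\frac{2}{\lceil 2r\rceil-3}$ to each $2$-neighbour, since then the $2$-vertex's charge, its face intake at least $\frac23-\frac{4}{\lceil 2r\rceil-3}$, and two such donations sum to $-2+\frac23-\frac{4}{\lceil 2r\rceil-3}+\frac43+\frac{4}{\lceil 2r\rceil-3}=0$.

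The verification is then a configuration analysis governed by Lemmas~\ref{trivial_statement}--\ref{SZ}. By Lemmas~\ref{trivial_statement} and~\ref{VAL}: a $2$-vertex has exactly two neighbours, both $6$-vertices each having the $2$-vertex as its only non-$6$-neighbour; a $3$-vertex has all neighbours of degree $\ge 5$, and either three $6$-neighbours or one $5$- and two $6$-neighbours; a $4$-vertex has at most one $4$-neighbour and hence at least three neighbours of degree $\ge 5$; a $5$-vertex with a $3$-neighbour has four $6$-neighbours and no other $4^-$-neighbour; and a $6$-vertex has at most one $4^-$-neighbour when it has a $2$-neighbour, at most two when it has a $3$-neighbour. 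Feeding these structural facts, the sharp form of the Claim~\ref{C1} estimate, and Lemmas~\ref{Zhang_lemma} and~\ref{SZ} (which bound the triangles through an edge of small degree-sum and force its second neighbourhood to have high degree) into the rules, one checks case by case that $ch^*(v)\ge 0$ for every vertex, each inequality holding because $r>3+\frac25$.

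The hard part will be the high-degree vertices, where the margins are far thinner than for $\Delta=5$: a $6$-vertex has surplus only $2$, which a large $r$ can erode to roughly $\frac23$ through forced incident triangles, and a $5$-vertex has surplus only $1$, possibly $0$ once its triangles are counted — yet a $5$-vertex next to a $3$-vertex still owes it more than a unit. The resolution must lean on the backup flow: a $5$- or $6$-vertex adjacent to a $4^-$-vertex has many $6$-neighbours (Lemma~\ref{VAL}), and one must show both that the $6$-neighbours with no $4^-$-neighbour of their own can top it up, and that adjacency to the $4^-$-vertex forces enough triangle-freeness (Lemma~\ref{SZ}, together with the degree clauses of Lemma~\ref{Zhang_lemma}, applicable exactly at the edges with $d(x)+d(y)=\Delta+2=8$) for its face intake to be large enough. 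Getting this circulation to close with the tight constant $3+\frac25$ — especially in the ``shared donor'' configurations where two $4^-$-vertices, or a loaded $5$-vertex and a $4^-$-vertex, draw on the same $6$-vertex — is the crux of the argument.
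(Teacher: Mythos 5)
Your plan diverges from the paper's proof in one essential respect: you announce that ``the entire force comes from the local constraint imposed by a large $F^*$, not from any edge count,'' whereas the paper's proof of this theorem leans crucially on Theorem~\ref{Luo_etal} ($|E(G)|\geq\frac{1}{2}(5|V(G)|+3)$). The paper combines that bound with Euler's formula to get $\sum_{f}(2d_G(f)-6)-|V(G)|\leq -15$, assigns $ch(v)=-1$ to every vertex and $ch(f)=2d_G(f)-6$ to every face, and then needs only two mild rules (a $4^+$-face sends $\frac{2d_G(f)-6}{d_G(f)}$ to each incident vertex; a heavy vertex sends $\frac{3}{10}$ or $\frac{1}{10}$ to light neighbours). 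The point is that the charge $2d_G(f)-6$ makes even a $4$-face worth $2$ units, so every vertex can recover its uniform deficit of $1$; the price for enriching the faces this way is exactly the edge-count theorem, which is what makes the total charge negative. It is not an optional convenience.

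Your recalibrated $d_G(x)-4$ scheme, by contrast, does not close, and the failure is already visible in configurations you would have to handle. With $r>3+\frac{2}{5}$ the integer bounds are $\lceil 2r\rceil\geq 7$ and $\lceil 6r\rceil\geq 21$, so your Claim~\ref{C1} analogue gives a $6$-vertex a face intake of at least $-\frac{2}{3}-\frac{4}{21-15}=-\frac{4}{3}$ in the worst case, leaving it a surplus of only $2-\frac{4}{3}=\frac{2}{3}$; yet your rule requires it to send $\frac{2}{3}+\frac{2}{\lceil 2r\rceil-3}\geq\frac{2}{3}+\frac{1}{2}=\frac{7}{6}$ to a $2$-neighbour, a shortfall of $\frac{1}{2}$. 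Similarly, a $3$-vertex with neighbours of degrees $5,6,6$ (permitted by Lemma~\ref{VAL}) needs $1+\frac{4}{\lceil 3r\rceil-6}-\frac{1}{3}\geq\frac{22}{15}$ in donations, while its $5$-neighbour has worst-case surplus $0$ and its two $6$-neighbours have $\frac{2}{3}$ each, totalling $\frac{4}{3}<\frac{22}{15}$; Lemma~\ref{SZ} does not forbid the triangles that realize these worst cases (the relevant degree conditions $d(z)<6$ and $d(z)<5$ fail for the neighbours actually present). You anticipate this and invoke a ``backup flow'' from second-tier $6$-vertices, but those also have surplus only $\frac{2}{3}$ and may serve several needy first-tier vertices at once; whether such a circulation closes is precisely the global question that the edge-count theorem settles and that no purely local argument at the threshold $3+\frac{2}{5}$ (barely above the trivial lower bound $3$, unlike the comfortable $7+\frac{1}{2}$ in the $\Delta=5$ case) is shown to answer. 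As written, the proposal has a genuine gap.
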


\begin{proof}
	Suppose to the contrary that $F^*(G) > 3 + \frac{2}{5}$.
	Let $\Sigma$ be an embedding of $G$ into the Euclidean plane and $F^*(G) = F((G,\Sigma))$.  We have
	\begin{align*}
		\sum_{f\in F(G)}(2d_G(f)-6)
		&=4|E(G)|-6|F(G)|\\
		&=4|E(G)|-6(|E(G)|+2-|V(G)|)~~(\mbox{by Euler's formula)}\\
		&=6|V(G)|-2|E(G)|-12\\
		&\leq |V(G)|-15~~~~~(\mbox{by Theorem}~\ref{Luo_etal})
	\end{align*}
	and therefore, $-|V(G)| + \sum_{f\in F(G)}(2d_G(f)-6)\leq -15$. \hfill $(\ast)$
	
	Define the initial charge $ch(x)$ for each $x\in V(G)\cup F(G)$ as follows: $ch(v)=-1$ for every $v\in V(G)$ and $ch(f)=2d_G(f)-6$ for every $f\in F(G)$.
	It follows from inequality $(\ast)$ that $\sum_{x\in V(G)\cup F(G)}ch(x)\leq-15$.
	
	A vertex $v$ is called \textit{heavy} if $d_G(v)\in \{5,6\}$ and $v$ is incident with a face of length 4 or 5.
	A vertex $v$ is called \textit{light} if $2\leq d_G(v)\leq 4$ and $v$ is incident with no $6^+$-face and with at most one $4^+$-face.
	We say a light vertex $v$ is \textit{bad-light} if $v$ has a neighbor $u$ such that $d_G(u)+d_G(v)=8$, and \textit{good-light} otherwise.
	
	Discharge the elements of $V(G)\cup F(G)$ according to following rules.\\
	{\bf R1:}~~every $4^+$-face $f$ sends $\frac{2d_G(f)-6}{d_G(f)}$ to each incident vertex.\\
	{\bf R2:}~~every heavy vertex sends $\frac{3}{10}$ to each bad-light neighbor, and $\frac{1}{10}$ to each good-light neighbor.
	
	Let $ch^*(x)$ denote the final charge of each $x\in V(G)\cup F(G)$ after discharging.
	On one hand, the sum of charge over all elements of $V(G)\cup F(G)$ is unchanged. Hence, we have
	$\sum_{x\in V(G)\cup F(G)}ch^*(x)\leq-15.$ On the other hand, we show that $ch^*(x) \geq 0$ for every $x\in V(G)\cup F(G)$ and hence, this obvious contradiction completes the proof.
	
	It remains to show that $ch^*(x) \geq 0$ for every $x \in V(G) \cup F(G)$.
	
	Let $f\in F(G)$.
	If $d_G(f)=3$, then no rule is applied for $f$. Thus, $ch^*(f)=ch(f)=0.$
	
	If $d_G(f)\geq 4$, then by R1 we have $ch^*(f)=ch(f)-d_G(f)  \frac{2d_G(f)-6}{d_G(f)}=0.$
	
	Let $v\in V(G)$. First we consider the case when $v$ is heavy. On one hand,
	since $F((G,\Sigma)) > 3 + \frac{2}{5}$, it follows that either $v$ is incident with a $5^+$-face and another $4^+$-face or $v$ is incident with at least three 4-faces.
	In both cases, $v$ receives at least $\frac{13}{10}$ in total from its incident faces by R1. On the other hand, we claim that $v$ sends at most $\frac{3}{10}$ out in total.
	If $v$ is adjacent to a bad-light vertex $u$, then all other neighbors of $v$ have degree at least 5 by Lemma \ref{Zhang_lemma}. Hence, $v$ sends $\frac{3}{10}$ to $u$ by R2 and nothing else to its other neighbors.
	If $v$ is adjacent to no bad-light vertex, then $v$ has at most three good-light neighbors by Lemma \ref{VAL}. Hence, $v$ sends $\frac{1}{10}$ to each good-light neighbor by R2 and nothing else
	to its other neighbors. Therefore, $ch^*(v)\geq ch(v)+\frac{13}{10}-\frac{3}{10}=0.$
	
	Second we consider the case when $v$ is not heavy. In this case, $v$ sends no charge out.
	If $v$ is incident with a $6^+$-face, then $v$ receives at least $1$ from this $6^+$-face by R1. This gives $ch^*(v)=ch(v)+1=0.$
	If $v$ is incident with at least two $4^+$-faces, then $v$ receives at least $\frac{1}{2}$ from each of them by R1. This gives $ch^*(v)=ch(v)+ \frac{1}{2} + \frac{1}{2}=0.$
	We are done in both cases above. Hence, we may assume that $v$ is incident with no $6^+$-face and with at most one $4^+$-face. From $F((G,\Sigma)) > 3 + \frac{2}{5}$ it follows that $v$ is incident to a face $f_v$ such that $d_G(f_v)\in \{4,5\}$.
	Since $v$ is not heavy, $2\leq d(v)\leq 4$.
	Hence, $v$ is light by definition. We distinguish two cases by the length of $f_v$.
	
	If $d_G(f_v)=4$, then by the fact that $F^*(G)\geq 3+\frac{2}{5}$, we have $d_G(v)=2$. By Lemma \ref{trivial_statement}, both neighbors of $v$ are heavy and $v$ is bad-light. Thus, $v$ receives $\frac{1}{2}$ from $f_v$ by R1 and $\frac{3}{10}$ from each neighbor by R2, yielding $ch^*(v)=ch(v)+\frac{1}{2}+ \frac{3}{10} + \frac{3}{10}>0$.
	
	If $d_G(f_v)=5$, then $v$ receives $\frac{4}{5}$ from $f_v$.
	If $v$ is not a bad-light 4-vertex, then Lemma \ref{trivial_statement} implies that each neighbor of $v$ has degree 5 or 6. Hence,
	both of the two neighbors of $v$ contained in $f_v$ are heavy. By R2, each of them sends charge at least $\frac{1}{10}$ to $v$, and therefore, $ch^*(v)\geq ch(v)+\frac{4}{5}+ \frac{1}{10} +  \frac{1}{10}=0$.
	If $v$ is a bad-light 4-vertex, then
	Lemma \ref{VAL} implies that at least one of the two neighbors of $v$ contained in $f_v$ is heavy. Thus, this heavy neighbor sends charge $\frac{3}{10}$ to $v$,
	and therefore, $ch^*(v)\geq ch(v)+\frac{4}{5}+\frac{3}{10}>0$.
\end{proof}

\section{Concluding remarks}

Recently, Cranston and Rabern \cite{Cranston_2015} improved Jakobsen's result  (Theorem \ref{Jakobsen}) on the
lower bound on the number of edges in a 3-critical graph. They gave a computer-aided proof of the following statement.

\begin{theorem}[\cite{Cranston_2015}]
	Every $3$-critical graph $G$, other than the Petersen graph with a vertex deleted, has $|E(G)|\geq \frac{50}{37}|V(G)|.$
\end{theorem}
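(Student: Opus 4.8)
The plan is to prove the bound by a discharging argument over the vertices of $G$, refining Jakobsen's argument for Theorem~\ref{Jakobsen} so as to gain the extra slack between $\frac{4}{3}$ and $\frac{50}{37}$. Assign to every vertex $v$ the initial charge $\mu(v)=d_G(v)-\frac{100}{37}$, so that $\sum_{v\in V(G)}\mu(v)=2|E(G)|-\frac{100}{37}|V(G)|$ and the inequality to be proved is precisely the statement that this total is nonnegative. Writing $n_2$ and $n_3$ for the numbers of $2$- and $3$-vertices, this is equivalent to $11\,n_3\geq 26\,n_2$, which fails for $P^{*}$ (where $n_2=3$, $n_3=6$) by exactly $\frac{12}{37}$. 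A $3$-vertex carries surplus $\frac{11}{37}$ and a $2$-vertex deficit $-\frac{26}{37}$, so no single $3$-vertex can pay off even one adjacent $2$-vertex: the surplus of more than two $3$-vertices must be pooled for each $2$-vertex, and the whole point is to show that, outside of $P^{*}$, every $2$-vertex can find that surplus among nearby $3$-vertices that are not already claimed by other $2$-vertices.

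The first step is to pin down the local structure around a $2$-vertex. Lemma~\ref{trivial_statement} and Lemma~\ref{VAL} give, for any $2$-vertex $z$: the two neighbours of $z$ are $3$-vertices, each of them has both of its remaining neighbours of degree $3$, and no two $2$-vertices are adjacent; equivalently no $3$-vertex is adjacent to two $2$-vertices. Thus each $2$-vertex $z$ determines a cluster $C(z)$ consisting of $z$, its two neighbours $x_1,x_2$, and the vertices of $N(x_1)\cup N(x_2)$ other than $z,x_1,x_2$ (at most four, all of degree $3$). Lemma~\ref{Zhang_lemma} and Lemma~\ref{SZ} already restrict how two clusters may overlap, and the substance of the proof is a further list of small reducible configurations — subgraphs $H$ together with an edge $e$ for which one shows, by a Vizing-fan/Kempe-chain analysis of how a $3$-edge-colouring of $G-e$ can be modified and extended across $H$, that $H$ cannot appear in a $3$-critical graph. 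These configurations should say, in effect, that clusters cannot interlock too tightly: the amount of surplus that a given cluster must surrender to neighbouring clusters is bounded below $\frac{11}{37}$ per shared $3$-vertex unless one is in the Petersen pattern. Enumerating this finite family and verifying reducibility is the step I expect to be the main obstacle, and where a computer is needed: the colour-extension case analysis across a cluster is large, and the target ratio leaves essentially no room to spare, so the configuration list must be chosen and checked with care.

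With the reducible configurations available, the discharging rules would send the surplus of each $3$-vertex toward the $2$-vertices at distance at most $2$ from it — all of it to an adjacent $2$-vertex if there is one (there is at most one), and otherwise split among the $2$-vertices whose clusters it lies in — with amounts calibrated so that a $2$-vertex whose cluster meets no other cluster ends at charge exactly $0$. The reducibility lemmas force the only way a $3$-vertex can be over-demanded to be the configuration occurring in $P^{*}$: namely, a $2$-vertex $z$ all four of whose distance-$2$ vertices are themselves adjacent to distinct $2$-vertices, so that each of them must hand its surplus to a different cluster and $z$ is left $\frac{4}{37}$ short. The final step is to argue that this "everywhere Petersen-like" local pattern, together with $2$-connectedness and the criticality of $G$, can be realised only by $P^{*}$ itself. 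Once that is done, $P^{*}$ appears as the unique exception, every other $3$-critical graph has $\mu^{*}(v)\geq 0$ for all $v$, and summing gives $2|E(G)|-\frac{100}{37}|V(G)|\geq 0$, as desired.
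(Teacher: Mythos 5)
This statement is not proved in the paper at all: it is quoted from Cranston and Rabern, and the authors explicitly note that the only known proof is computer-aided. So there is no in-paper argument to compare you against; the question is whether your outline would itself constitute a proof, and it would not. Your accounting is correct as far as it goes: the charge $d_G(v)-\frac{100}{37}$, the reformulation as $11n_3\geq 26n_2$, the deficit of $\frac{12}{37}$ for the Petersen graph minus a vertex, and the local structure around a $2$-vertex obtained from Lemmas \ref{trivial_statement} and \ref{VAL} all check out. But everything that actually makes the theorem true is deferred: you do not exhibit the family of reducible configurations, you do not carry out (or even concretely describe) a single reducibility argument, you do not state discharging rules precisely enough that one could verify every vertex ends with nonnegative charge, and you do not prove that the ``everywhere Petersen-like'' pattern forces $G$ to be $P^*$. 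You acknowledge this yourself when you flag the configuration enumeration as ``the main obstacle, and where a computer is needed.'' Since, as you also observe, the ratio $\frac{50}{37}$ leaves essentially no slack, the theorem lives or dies on exactly those omitted verifications; a discharging framework with the configuration analysis left open is a plan, not a proof.

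A secondary caution: do not lean on the claim that Lemmas \ref{Zhang_lemma} and \ref{SZ} ``already restrict how two clusters may overlap.'' For $\Delta=3$ and an edge $xy$ with $d(x)+d(y)=5$, Lemma \ref{SZ} asks for $xz$ to lie in at least $d(x)+d(y)-\Delta-2=0$ triangles, so it only reproduces the fact (already implied by Lemma \ref{VAL}) that no $3$-vertex has two $2$-neighbours; and parts 2 and 3 of Lemma \ref{Zhang_lemma} are vacuous or trivial in the subcubic setting. The structural information that drives the Cranston--Rabern bound comes from reducibility lemmas specific to subcubic critical graphs, not from these general adjacency lemmas, so your clusters are in fact almost unconstrained until that missing analysis is supplied. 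If you want a statement provable by hand along these lines, the realistic target is Jakobsen's bound $|E(G)|\geq\frac{4}{3}|V(G)|$ (Theorem \ref{Jakobsen}), which is what the paper itself uses in Proposition \ref{pro_globe}.
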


Hence, $|E(G)|\geq \frac{50}{37}|V(G)|$ for every planar 3-critical graph. By a similar argument as in the proof of
Proposition \ref{pro_globe}, this result improves the bound of $\overline{b}_3$ from $6\leq \overline{b}_3< 8$  to $6\leq \overline{b}_3< \frac{100}{13}.$

However, the precise values of these parameters are unclear.

\begin{problem}
	What are the precise values of $\overline{b}_k$ and $b_k^*$?
\end{problem}

By Proposition \ref{pro_globe}, $\overline{F}(G)$ has an upper bound for every critical planar graph $G$.
However, this is not always true for class 2 planar graphs. Similarly, Theorems \ref{thm_delta5_F*} and \ref{thm_delta6_F*} can not be generalized
to class 2 planar graphs.

\end{document}